\documentclass[12pt,reqno]{amsart}
\setlength{\textheight}{23.2cm}
\setlength{\textwidth}{16cm}
\setlength{\topmargin}{-0.8cm}
\setlength{\parskip}{0.3\baselineskip}
\hoffset=-1.4cm

\setcounter{secnumdepth}{3}
\setcounter{tocdepth}{3}

\usepackage{amssymb}
\usepackage{epsf,graphicx}
 
\newtheorem{theorem}{Theorem}[section]
\newtheorem{proposition}[theorem]{Proposition}
\newtheorem{lemma}[theorem]{Lemma}

\theoremstyle{definition}
\newtheorem{definition}[theorem]{Definition}
\newtheorem{remark}[theorem]{Remark}
\newtheorem{conjecture}[theorem]{Conjecture}

\numberwithin{equation}{section}

\newcommand{\X}{\mathbb{X}}
\newcommand{\M}{\mathbb{M}}
\newcommand{\E}{\mathbb{E}}
\newcommand{\Q}{\mathbb{Q}}
 
\newcommand{\C}{\mathbb{C}}
\newcommand{\bP}{\mathbb{P}}
\newcommand{\SO}{\mathcal{O}}
\newcommand{\SI}{\mathcal{I}}

\begin{document}
\baselineskip=15pt

\title[Degenerations of bundle moduli]{Degenerations of bundle moduli}

\author[I. Biswas]{Indranil Biswas}

\address{School of Mathematics, Tata Institute of Fundamental
Research, 1 Homi Bhabha Road, Mumbai 400005, India}

\email{indranil@math.tifr.res.in}

 \author[J. Hurtubise]{Jacques Hurtubise}

\address{Department of Mathematics, McGill University, Burnside
Hall, 805 Sherbrooke St. W., Montreal, Que. H3A 2K6, Canada}

\email{jacques.hurtubise@mcgill.ca}

\subjclass[2000]{14P99, 53C07, 32Q15.}

\keywords{Stable vector bundles, algebraic curves, nodal degenerations, isomonodromy, trinion}

\date{}

\begin{abstract}
Over a family $\X$ of complete curves of genus $g$, which gives the degeneration of a smooth curve into 
one with nodal singularities, we build a moduli space which is the moduli space of holomorphic 
${\rm SL}(n, \mathbb C)$
bundles over the generic smooth curve $X_t$ in the family, and is a moduli 
space of bundles equipped with extra structure at the nodes for the nodal curves in the 
family. This moduli space is a quotient by $(\mathbb C^*)^s$ of a moduli space on the 
desingularisation. Taking a ``maximal'' degeneration of the curve into a nodal curve built 
from the glueing of three-pointed spheres, we obtain a degeneration of the moduli space of 
bundles into a $(\mathbb C^*)^{(3g-3)(n-1)}$-quotient of a $(2g-2)$-th power of a space 
associated to the three-pointed sphere. Via the Narasimhan-Seshadri theorem, the moduli space of 
${\rm SL}(n, \mathbb C)$
bundles on the smooth curve is a space of representations of the fundamental group into
${\rm SU}(n)$ (the ``symplectic picture''). We obtain the degenerations also in this symplectic 
context, in a way that is compatible with the holomorphic degeneration, so that our limit 
space is also a $(S^1)^{(3g-3)(n-1)}$ symplectic quotient of a $(2g-2)$-th power of a space 
associated to the three-pointed sphere.
\end{abstract}

\maketitle

\tableofcontents

\section{Introduction} 

The celebrated Verlinde formulae, \cite{V}, give the dimension of the spaces of sections of line bundles over the 
moduli spaces of vector bundles on a compact Riemann surface. There are several proofs of this, but a beautiful result 
of Jeffrey and Weitsman, in \cite{JW}, gives in our minds for ${\rm SL}(2,{\mathbb C})$ one of the most elegant 
explanations, if not quite a proof, of the result. By the Narasimhan-Seshadri theorem \cite{NS}, this moduli 
space is equivalent to the representation variety of flat ${\rm SU}(2)$ connections on the Riemann surface, and 
the paper \cite{JW} shows that the Goldman flows, \cite{Go}, on this space are given by a densely defined 
Hamiltonian system, which corresponds to a torus action on an open dense subset of the moduli space. The moduli 
space is then ``almost'' a toric variety, and the count of integer lattice points in the moment polytope (i.e., 
what would give the dimension of the space of sections on the moduli space, if it were genuinely toric) turns out 
to be given by the Verlinde formulae. The result is formulated in terms of Bohr-Sommerfeld quantization. The 
process fails for higher rank: there are simply not enough Goldman flows.

Recent works of Kaveh and Khovanski in \cite{KK} in the complex set-up and of Harada and 
Kaveh in \cite{HaK} in the symplectic set-up show that such a property of being ``almost 
toric'' is rather prevalent compared to what one might expect. The argument of \cite{HaK} 
proceeds by degeneration into something that is actually toric; in the case considered
by Jeffrey-Weitsman, the paper \cite{HuJ} in fact constructs a candidate toric variety, associated to a 
degeneration of the curve to a nodal curve that consists of the glueing of three-punctured 
spheres (also called ``trinions'').

It is natural to address the question whether such a picture holds in the more general case of moduli space of 
vector bundles of arbitrary rank. This would have the potential of turning into a quite direct verification of 
the Verlinde formulae. The Jeffrey-Weitsman result suggests that the moduli space should be associated to a 
pants decomposition of the surface, or more properly to its holomorphic analogue, namely a degeneration into a 
nodal curve consisting of glueings of three-pointed spheres --- a sort of balloon animal. Thus a first step for us 
will be to consider such a degeneration, of course beginning with the appearance of just one node.

We do not want just any degeneration. The moduli space of vector bundles has a symplectic interpretation via the 
Narasimhan-Seshadri theorem as a space of representations of the fundamental group into ${\rm U}(n)$ \cite{NS}; the 
limit spaces that we want will be toric, and so also have a both holomorphic and symplectic representation. We 
will want our degenerations to proceed in parallel, both in the symplectic and holomorphic category. Indeed, while there are several degenerations that have been studied in the past \cite{Bh2,Bh3,Gi,NagS1,NagS2,Te,Xi}, we want the degeneration to track the symplectic geometry quite closely.

We will also want in the limit that our space on the nodal curve be obtained as the result of a symplectic or 
holomorphic quotient of a moduli space defined on the punctured curve obtained by unglueing the node. Thus, 
proceeding iteratively over the appearance of nodes into a final decomposition of the curve into a glueing of 
trinions (three-punctured spheres), our degenerate moduli space should appear as the result of a symplectic or holomorphic quotient of a product 
of moduli spaces associated to trinions.

We begin in Section \ref{se2} with the symplectic category of representations of the fundamental group
of the Riemann surface. 
Representing the degenerations of the curve on an open set as a family of quadrics in the plane, the degeneration 
is easy to obtain, in terms of flat connections, as the restriction of a singular flat connection in the plane. 
This gives us by the way another desired property, that isomonodromic deformations should give us a holomorphic 
family, well-behaved in the limit.

As a limiting flat connection, we will obtain on the twice punctured surface constructed by 
unglueing the curve a connection with regular singular points at the punctures, with 
opposite residues. The reduction will be a (partial) identification of the fibers at the 
singular points. Such a process has already been discussed earlier in \cite{HuJS, HuJS2}.

We then turn in Section \ref{se3} to give a holomorphic interpretation of all this. Again, referring to \cite{HuJS, 
HuJS2}, we will be deforming the holomorphic bundles into bundles over the desingularised nodal curve equipped 
with a ``framed parabolic structure''. One product of this by-discussion is an emphasis on a particularly 
apposite interpretation of parabolic weights, as the decay rates of sections of a connection with a regular 
singular point; alternately, as the eigenvalues of the residue of the connection at a singular point.

We then turn to moduli; this is a fibered problem, over the family $X_t$ of curves degenerating to a nodal curve 
$X_0$. From the symplectic point of view, this is quite straightforward; one is simply dealing with a family of 
representations of the fundamental group, with some extra structure at $t\,=\,0$. This already gives the family
of moduli spaces as a topological space; we then can concentrate on the holomorphic structure. Fiberwise, this is given by 
the natural Narasimhan-Seshadri correspondence, with a variant introduced in \cite{HuJS2} for $t\,=\,0$. We even 
have a family of holomorphic sections, given by isomonodromic deformation, this is basically enough, even if the 
isomonodromic deformation does not map holomorphic families to holomorphic families.

The paper closes with a discussion of multiple degenerations, allowing occurrence of several nodes.

\section{Symplectic degeneration}\label{se2}

\subsection{Local models for the curves}

Let $U\,:=\, \{z\, \in\, {\mathbb C}\, \mid\, |z|\, <\,1\}$ be the unit disk. 
We consider a holomorphic family
\begin{equation}\label{fx}
p\, :\, \X\, \longrightarrow\, U
\end{equation}
of complex projective curves. We suppose that the fibers $X_t\, :=\, p^{-1}(t)$, for $t\,\neq\, 0$ are smooth 
non-singular projective curves of genus $g$, and that the fiber $X_0\, :=\, p^{-1}(0)$ has a single node. We also suppose 
that $X_0$ is irreducible, so that its desingularisation ${\widetilde X}_0$ is connected. This
assumption implies that the genus of ${\widetilde X}_0$ is actually
$g-1$. It should be clarified that this assumption is made mostly for simplicity of notation;
the general case of disconnected ${\widetilde X}_0$ is essentially the same.

Let $x_1,\, x_2$ be the two points in the desingularisation ${\widetilde X}_0$ that map to the node of $X_0$.

Let $B$ be the polydisk in $\C^2$ given by the product of two disks of radius 2 centered at the origin. We will
assume that a local model for the degeneration to the node is given by the family $\Q\Q$ of quadrics
\begin{equation}\label{eqt}
\Q\Q\, \supset\,
Q_t\,=\, \{(x,\,y)\,\in\, B\,\mid\, xy \,=\, t,\ \ t\,\in\, U\}
\end{equation}
in $B$, so that
there is a neighbourhood $N$ of the node in $\X$ such that $X_t\cap N\,=\, Q_t$. 

The $Q_t$ are cylinders, for $t\,\neq\, 0$; in fact they leave $B$ on curves $$(x(t,\,\theta), \,y(t,\, \theta))
\,=\,\left(2\exp ({\sqrt{-1}\theta}),\, \frac{t}{2}\exp({-\sqrt{-1}\theta})\right)$$ and $$(x(t,\,\theta), \,y(t,\, \theta))
\,=\, \left(\frac{t}{2}\exp({\sqrt{-1}\theta}),\, 2\exp({-\sqrt{-1}\theta})\right)\, .$$
At $t\,=\,0$, the cylinder becomes a
pair of disks, leaving the polydisk $B$ at the circles $$(x(t,\,\theta), \,y(t,\, \theta))
\,=\,(2\exp({\sqrt{-1}\theta}),\,0)\ \text{ and }\ (x(t,\,\theta), \,y(t,\, \theta))
\,=\,(0,\,2\exp({-\sqrt{-1}\theta}))\, .$$
There is a
cycle $\gamma_t$ in $X_t$, given by $$(x(t,\,\theta), \,y(t,\, \theta))
\,=\,\sqrt{t}(\exp({\sqrt{-1}\theta}),\, \exp({-\sqrt{-1}\theta}))\, ,$$ that is vanishing as $t$
approaches $0$. It may be noted that $\gamma_t$
projects to $$x(t) \,=\, \sqrt{t}\exp({\sqrt{-1}\theta})$$ in the $x$ direction, and to $y(t)
\,=\, \sqrt{t}\exp({-\sqrt{-1}\theta})$ in the $y$ direction.

This local model can be glued to the boundaries of the disjoint
union $(U\times S^1)\sqcup (U\times S^1)$
of a family of smooth curves with 
two punctures over $U$ to construct a family $\X$ of curves over $U$, with central fiber $X_0$; suppose that the genus 
of $X_t$, $\ t\,\neq\, 0$, is $g$. We will treat first the case of the two-punctured curves being connected, in which case 
the genus of the blowup ${\widetilde X}_0$ of the central fiber $X_0$ is $g-1$.

Blowing up the origin of our local model gives a holomorphic mapping ${\widetilde B}\,\longrightarrow
\,B$, with a divisor $D$ of self-intersection $-1$ over zero, and similarly a holomorphic
mapping $\pi\,:\, {\widetilde \X}\,\longrightarrow \, \X$; the fiber over zero 
will be ${\widetilde X}_0\cup 2D$ (counting multiplicity), and $X_t$ is the fiber over $t$.

\subsection{Local models for connections}\label{se2.2}

We have, likewise, a model for an isomonodromic family of connections. Consider on the curve 
$Q_{1/2}$ in \eqref{eqt}, say, a local system on a vector bundle of rank $n$ with monodromy $A$ around 
the curve $\gamma_ {1/2}$. We can extend this isomonodromically to the curves $Q_t\,$, 
$\,t\,\in\, U^*$; note that this has a non-trivial monodromy as $t$ moves around the origin, 
but as this monodromy is itself given by a power of $A$, the isomonodromic deformation is well 
defined. The isomonodromic deformation extends to the curve $Q_0$, as long as it is punctured 
at the node.

As, for example, in the work of Malgrange \cite{Mal}, isomonodromic deformations can be represented as flat 
connections on the entire family of quadrics $\Q\Q$, and indeed we can do this here, on the complement of the
coordinate axes. Let $ A \,=\, \exp(-2\pi \sqrt{-1}\vec\alpha)$, where
\begin{equation}\vec\alpha \,=\, diag(\alpha_1, \cdots ,\alpha_n)\end{equation}
is the diagonal $n\times n$ real matrix with diagonal entries
$(\alpha_1,\, \alpha_2,\,\cdots , \,\alpha_n)$ (the $(i,\, i)$-th entry
is $\alpha_i$) belonging to the simplex 
$$ 
\Delta = \{(\alpha_1,\, \alpha_2,\,\cdots , \,\alpha_n)\,\mid\,\alpha_1\,\geq\,\alpha_2\,\geq\,\cdots \,
\geq\,\alpha_n\,\geq \, \alpha_1 -1,\ \sum_{i=1}^n
\alpha_i\,=\, 0\, \}\, .$$
Over the polydisk $B$ in \eqref{eqt}, we consider the flat connection on a unitary bundle on $x\,\neq\, 0,\
y\,\neq\, 0$
$$\nabla \,=\, d + \sqrt{-1} \frac{\vec\alpha}{2} (d\theta_x -d\theta_y)\, =\,\partial +
\frac{\vec\alpha}{4}\left(\frac{dx}{x} -\frac{dy}{y}\right)+ \overline \partial +\frac{\vec\alpha}{4}
\left(\frac{-d\overline x}{\overline x} +\frac{-d\overline y}{\overline y}\right)\, ,$$
where $\theta_x$ (respectively, $\theta_y$) is the angular coordinate in the
$x$ (respectively, $y$) variable. Regarding the monodromy of the connection $\nabla$, the following hold:
\begin{itemize}
\item the monodromy around $x\,=\,0$ is $A^{1/2}\,=
\,\exp( -\pi\sqrt{-1}\vec\alpha)$,

\item the monodromy around $y\,=\, 0$ is $A^{-1/2}$, and

\item integrating
along $x\,=\, r\exp({\sqrt{-1}\theta}),\ y \,=\, r^{-1}t \exp({-\sqrt{-1}\theta})$,\
$\theta\,\in\, [0,\, 2\pi]$, that is on the cycle $\gamma_t$ on the curve $Q_t$, the
monodromy is $A$.
\end{itemize}
In coordinates $(x,\, t)$, the connection $\nabla$ becomes
$$\nabla \,=\, d + \sqrt{-1} \frac{\vec\alpha}{2}\left(2 d\theta_x -d\theta_t\right)$$
and in coordinates $(y,\,t)$, it is
$$\nabla \,=\, d +\sqrt{-1} \frac{\vec\alpha}{2}\left(-2 d\theta_y +d\theta_t\right)\, ,$$
and projecting out the normal ($d\theta_t$) components, we have a partial connection $\nabla^p$
over the curves. On the patch $x\,\not=\, 0$, taking $x$ as the coordinate: $$\nabla^p \,=\, d +
\sqrt{-1}\vec \alpha d\theta_x\, .$$ The partial connection extends well
to the limit $y\,=\,0$, despite the singularity of $\nabla$. Similarly, on $y\,\not=\, 0$, the
partial connection becomes $$\nabla^p \,=\, d -
\sqrt{-1}\vec\alpha d\theta_y\, ,$$ which again passes to the limit.
 
For use in the sections to follow, we consider connections compatible with the holomorphic
structure. Change gauge in the vector bundle by 
$$G\,=\, r_x^{-\vec\alpha/2} r_y^{\vec\alpha/2}\, ,$$ where $r_x^{\vec\alpha/2}$
(respectively, $r_y^{-\vec\alpha/2}$) is constructed using the matrix $\vec\alpha$ and the radial
function $r$ in $x$ (respectively, $y$) coordinate. This changes our connection by adding to it the term
$$-(dG) G^{-1} \,=\, \frac{\vec\alpha}{2}\frac{dr_x}{r_x}-\frac{\vec\alpha}{2}\frac{dr_y}{r_y}\, .$$
Now note that $\frac{dr }{r }\,=\, \frac{1}{2}(\frac{dz }{z }+\frac{d\overline z }{\overline z })$
and $d\theta \,=\,\frac{1}{2\sqrt{-1}}(\frac{dz }{z } - \frac{d\overline z }{\overline z })$, so that
the resulting connection is 
$$\nabla\,=\, d+\frac{\vec\alpha}{2}\left(\frac{dx }{x } -\frac{dy}{y}\right)
\,=\, d + \frac{\vec\alpha}{2}\left(\frac{2dx }{x } -\frac{dt }{t }\right)\,=\, d +
\frac{\vec\alpha}{2}\left(\frac{-2dy }{y} +\frac{dt }{t }\right)\, .$$
This gives a holomorphic gauge (so no $(0,\,1)$ type component), but one in which the bases are 
decaying or blowing up like $r_x^{\frac{\vec\alpha}{2}}r_y^{-\frac{\vec\alpha}{2}}$; we note 
that along the curves, the decay rate is $r_x^{\vec\alpha}$ along $x\,\neq\, 0$, and 
it is $r_y^{-\vec\alpha}$ along $y\,\neq\, 0$. Projecting out the $dt$--component, we have a holomorphic 
family of partial connections (i.e., holomorphic connection just along the curves), which pass 
well to the limit $t\,=\,0$, where there is a connection with poles on both patches at $x\,=\,0,\ 
y\,=\,0$; the residues are opposite of each other, modulo an integer.

The above local model can be used in deforming flat connections more globally. For example, choose 
a polystable holomorphic ${\rm SL}(n, \C$) vector bundle $E_{1/2}$ of degree zero, on the curve, say, $X_{1/2}$. By the 
Narasimhan-Seshadri theorem \cite{NS}, this corresponds to a flat unitary connection on $X_{1/2}$; this connection is denoted
by $\nabla_{1/2}$. We can then deform this connection isomonodromically to connections $\nabla_t$ on all of the 
$X_t$, $\,t\,\neq\, 0$, and also to $X_0$ away from the nodal point $(x,\,y) \,=\, (0,\,0)$. Indeed, over
$\X \setminus B$,
this is straightforward. We can then glue this to the partial connection given by our local model. Note that if
$A$ is the holonomy of the connection along the cycle $\gamma_t$, then on the limiting curve $X_0$ we 
obtain a connection with holonomy $A$ around the origin on the curve $y\,=\, 0$, and holonomy $A^{-1}$ 
around the origin on the curve $x\,=\,0$. This tells us that in the limit $t\,=\,0$, we have, along the 
two branches of the curve meeting at the node, opposite holonomies at the two punctures 
corresponding to the node; on the desingularisation, there are two parabolic points, with 
holonomy in opposite (inverse) conjugacy classes.

It is useful to consider the lift of this connection to the blow-up $\widetilde B$ of $B$ at the origin. This amounts to setting (on one coordinate patch) 
\begin{equation}\label{blowup1}
x \,=\, \widetilde x,\ \ y \,= \,\widetilde x \widetilde y\,;
\end{equation} 
the connection becomes, in terms of the holomorphic trivialization,
$$\nabla \,=\, d -\frac{ \vec\alpha}{2}\frac{d\widetilde y }{\widetilde y }\, , $$
and on the ``opposite'' patch, putting
\begin{equation}\label{blowup2}
x \,= \,\widehat x \widehat y,\ \ y \,=\,\widehat y\, ,\end{equation}
the connection becomes
$$\nabla \,=\, d + \frac{ \vec\alpha}{2}\frac{d\widehat x }{\widehat x }\, . $$
We therefore have poles along two disjoint divisors, with opposite polar parts.

\subsection{Spaces of representations}

We will restrict our attention to the group ${\rm SU}(n)$. Let $T$ be the standard diagonal 
torus in ${\rm SU}(n)$. Section \ref{se2.2} tells us that we should have, by isomonodromic 
deformation, a way of degenerating the character variety of ${\rm SU}(n)$ representations 
of the fundamental group of $X_t,\ t\,\neq\, 0$, to a space of representations of the 
fundamental group of the desingularised curve $\widetilde X_0$, with the constraint of 
opposite holonomy at the two punctures. Note that if we count parameters for ${\rm SU}(n)$ 
representations with the monodromy $A$ generic, we have for $X_t,\ t\,\neq\, 0$, exactly 
$(2g-2)(n^2-1)$ parameters for the representation space. Indeed, we have $2g(n^2-1) $ 
parameters corresponding to the generators of the fundamental group, minus $n^2-1$ 
parameters of constraints for the relation of the fundamental group of the curve, minus 
$n^2-1$ for quotienting by conjugation, giving the number $(2g-2)(n^2-1)$. For the curve $X_0$, we 
would get the following:
\begin{itemize}
\item $2g(n^2-1)$ parameters for the generators of the fundamental group of the punctured 
desingularised curve $\widetilde X_0$,

\item minus $n^2-1$ parameters of constraints from the relation of the generators of the 
fundamental group of $\widetilde X_0$,

\item minus $n^2-1$ for quotienting by conjugation,

\item minus a further $n-1$ constraints given by the constraint that conjugacy classes of 
holonomy should be opposite at the two punctures,

\item plus an additional $n-1$ parameters, as we must glue back together the desingularised 
curve to the nodal curve, and then the bundle over it so that the holonomies on the two branches 
of the node are of the form $A,\ A^{-1}$ with $A\,\in\,{\rm SU}(n)$. Generically, this glueing is only 
determined by the holonomy up to an element of the stabilizer of $A$, and so there are $n-1$ 
parameters (the dimension of the stabilizer).
\end{itemize}

In short this gives again, but in a different way, $(2g-2)(n^2-1)$ parameters for the 
representations over $X_0$. Note that on the desingularisation $\widetilde X_0$, fixing the 
conjugacy class of $A$, we have the symplectic version of the moduli space of 
parabolic bundles. The space we are seeking is the union of these parabolic moduli spaces,
together with some framing 
parameters which generically lie in the Cartesian product $(S^1)^{n-1}$. The framing parameters
are to be thought of as symplectically dual to the conjugacy 
class parameters in $\Delta$.

This na\"{\i}ve picture works well, as long as the conjugacy class of $A$ is generic, that is,
the eigenvalues are distinct. 
When $A$ is non-generic, things go awry, and we no longer have a symplectic space. There is a 
solution to this problem, given by quasi-Hamiltonian implosion, as developed in \cite{HuJS}.

To explain the above mentioned quasi-Hamiltonian implosion, first consider the space of flat ${\rm SU}(n)$ connections
$Conn$ on the punctured desingularised curve ${\widetilde X}_0$, equipped with framings at the two punctures: 
\begin{align}
Conn \,= \{(A_1,\, A_2,&\,B_1,\, B_2,\, (C_i,\, D_i)_{ i= 1,\cdots ,g-1})
\,\in \,{\rm SU}(n)^{2g+2}\,\nonumber\\ &
\big\vert\ (\prod_i^{g-1}[C_i,\, D_i])B_1A_1 B_1^{-1}B_2A_2B_2^{-1}
\,=\, 1\}/{\rm SU}(n)\, .\label{eqcon}\end{align}
Here $A_j$ represents the holonomy at the $j$-th puncture in the trivialization given there, and
$B_j$ represents the parallel transport from the puncture, in the framing given there, to a base point, while 
$C_i,\,D_i$ represent transport around a standard homotopy basis, starting from a
base point. We then quotient by the natural action derived from a change of gauge at the base point.

The implosion
\begin{equation}\label{EC}
C\,=\, Conn_{impl}
\end{equation}
is given by 
\begin{itemize} 
\item restricting the matrices $A_1,\, A_2$ to make them lie in the fundamental alcove 
$\mathcal A$ of the diagonal torus (which represents the set of all conjugacy classes), so that $A_1,\, A_2$ are diagonal, and 
then

\item imposing an 
equivalence relation on these spaces, given by identifying to points the orbits of the elements $B_1,\, B_2$ under the 
right action of the commutators
$$[{\rm Stab}(A_1),\ {\rm Stab}(A_1)]\, ,\ \ [{\rm Stab}(A_2),\ {\rm Stab}(A_2)]$$ of the
stabilizers of $A_1,\, A_2$ under conjugation. 
\end{itemize}
Note that 
for $A_j$ generic, this commutator is trivial. More generally, if $${\rm Stab}(A_j)\,=\,
S({\rm U}(n_1)\times\cdots\times{\rm U}(n_k))\, ,$$
then the commutator is 
$${\rm SU}(n_1)\times\cdots\times{\rm SU}(n_k).$$
 From
these it follows that instead of having, in addition to the 
representation of the fundamental group, a unitary framing of each eigenspace of the $A_i$, the implosion process 
actually leaves us instead with the lesser information of a framing of the top exterior power of each generalized
eigenspace. However, for 
generic (meaning distinct) eigenvalues, this is the same information. There is also a holomorphic interpretation of 
the space $C$ (defined in \eqref{EC}), given in \cite{HuJS2}, to which we shall return later.

On $C$ in \eqref{EC}, there is a Hamiltonian action of two copies $T\times T$ of the diagonal matrices, given by 
$$(t_1, \,t_2) (A_1,\, A_2,\,B_1,\, B_2,\, (C_i,\, D_i)_{i= 1,\cdots ,g-1})
\,= \,(A_1,\, A_2, \,B_1t_1,\, B_2t_2,\, (C_i, \,D_i)_{i= 1,\cdots ,g-1}).$$
This action, in essence, is on a framing at the two punctures. The action is symplectic, with moment map for the
action being simply $(A_1, \,A_2)$, in the standard linear parametrization of the fundamental alcove,
realized as the exponential of an isomorphic alcove in the Lie algebra.
Summarizing we have:

\begin{theorem}[{\cite{HuJS}}]
The space $C$ in \eqref{EC} is a ``master moduli space'' for the representations of the fundamental group of
${\widetilde X}_0$ into ${\rm SU}(n)$. It
is a symplectic manifold with singularities, and is equipped with a Hamiltonian action of $T\times 
T$ with a decomposition into symplectic strata, labelled by the cells of the fundamental alcove (essentially, the 
multiplicity patterns of the eigenvalues of $A_j$). The symplectic reductions at $$(A_1 ,\, A_2 )\,=\, (\exp(2\pi
\sqrt{-1}\vec\alpha_1) ,\,\, \exp(2\pi\sqrt{-1} \vec \alpha_2)) \,\,\in\,\, ({\mathcal A}_1,\,{\mathcal A}_2)$$
are the moduli spaces of representations of the fundamental group of
$\widetilde X_0$, with fixed conjugacy classes $(A_1 ,\, A_2 )$ at the 
punctures.\end{theorem}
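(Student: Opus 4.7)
The plan is to realize the statement as an instance of the quasi-Hamiltonian implosion construction applied to a moduli of flat connections with boundary. First, I would recall that the space $Conn$ in \eqref{eqcon}, before passing to the implosion, carries a natural quasi-Hamiltonian structure in the sense of Alekseev--Malkin--Meinrenken, with residual $\mathrm{SU}(n)\times \mathrm{SU}(n)$ action at the two punctures and with group-valued moment map sending $(A_1,A_2,B_1,B_2,(C_i,D_i))$ to the pair of conjugacy-class-valued holonomies, which after the gauge reduction at the base point is represented simply by $(A_1,A_2)$. The relation $\prod_i[C_i,D_i]\cdot B_1 A_1 B_1^{-1}\cdot B_2 A_2 B_2^{-1}=1$ is precisely the product of the fusion moment maps, so $Conn$ is the fusion of $g-1$ copies of the double $D(\mathrm{SU}(n))$ together with two conjugation orbits' worth of boundary data, mapped to the identity.

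Next I would apply the quasi-Hamiltonian implosion functor from \cite{HuJS} at each puncture separately. Implosion takes a quasi-Hamiltonian $G$-space and produces a stratified quasi-Hamiltonian $T$-space: one slices the moment map down to the fundamental alcove $\mathcal{A}\subset T$ (which is a fundamental domain for conjugation), then collapses, over each face $\sigma\subset\mathcal{A}$, the orbits of $[\mathrm{Stab}(\sigma),\mathrm{Stab}(\sigma)]$ acting on the fiber. This is precisely the two-step recipe specified in the definition of $C$ in \eqref{EC}. Carrying this out at both $A_1$ and $A_2$ yields the $T\times T$-space $C$, with moment map $(A_1,A_2):C\to \mathcal{A}\times\mathcal{A}$, and a decomposition into symplectic pieces indexed by pairs of open faces, matching the stratification claim.

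The symplectic reduction statement then follows by a ``reduction in stages'' argument: for a regular value $(\vec\alpha_1,\vec\alpha_2)$ in the open interior of the alcoves, the stabilizers are tori, the implosion is a local diffeomorphism onto its image near that level, and the fiber of the moment map coincides with the conjugation-orbit level set in $Conn$. For non-generic $(\vec\alpha_1,\vec\alpha_2)$ the collapsing of commutator orbits is exactly the operation needed to compensate for the failure of the conjugacy orbit $\mathcal{O}_{A_j}$ to be a free $\mathrm{SU}(n)$-orbit; what is left after collapse is a framing of the top exterior power of each generalized eigenspace, which is the natural datum parametrizing reductions at non-generic levels. Hence the reduction $\mu^{-1}(\vec\alpha_1,\vec\alpha_2)/(T\times T)$ is identified with the Atiyah--Bott/Mehta--Seshadri parabolic moduli space at those conjugacy classes.

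The main obstacle, and the most delicate point, is verifying the \emph{stratified symplectic} structure across the cells of $\mathcal{A}\times\mathcal{A}$: one must check that the symplectic forms on the individual strata, obtained as reductions of the quasi-Hamiltonian $2$-form on $Conn$ by the commutator subgroups, fit together continuously as one passes from an open face to its closure, and that the $T\times T$ moment map equation persists at the singular strata. In \cite{HuJS} this is handled by a local model around the boundary faces of the alcove in which the implosion is identified with the universal $\mathrm{SU}(n)$-implosion fused with the smooth interior part; I would simply invoke this local structure result and check that the global construction here is indeed the pointwise fusion of the local one, which makes everything functorial under restriction to strata.
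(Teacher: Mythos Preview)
The paper does not supply a proof of this theorem at all: it is stated as a quotation of \cite{HuJS}, with the label \texttt{[\cite{HuJS}]} attached and no argument given. So there is nothing in the paper to compare your proposal against.

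That said, your sketch is a faithful outline of the construction actually carried out in \cite{HuJS}. You correctly identify $Conn$ as a fusion product of copies of the double $D(\mathrm{SU}(n))$ with two boundary pieces, equipped with the Alekseev--Malkin--Meinrenken quasi-Hamiltonian structure and residual $\mathrm{SU}(n)\times\mathrm{SU}(n)$ action at the punctures; you then apply the quasi-Hamiltonian implosion functor at each puncture, which is exactly how \cite{HuJS} produces the stratified $T\times T$-space with alcove-valued moment map. Your reduction-in-stages argument for identifying the symplectic quotients with the fixed-conjugacy-class representation spaces is also the one used there, and your caveat about the delicate point --- the continuity of the stratified symplectic structure across faces of the alcove, handled via the universal implosion local model --- is precisely where the technical work in \cite{HuJS} lies. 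In short, your proposal is not an alternative route but a correct summary of the cited source; for the purposes of this paper one would simply cite \cite{HuJS} as the authors do.
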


We take instead a symplectic quotient of $C$ in \eqref{EC} by the anti-diagonal torus $T_{ad}$ in $T\times T$; this 
sets, in the alcove parametrization, $A_2 \,=\, R(A_1^{-1})$, where $R$ reverses the order of the diagonal entries, 
and identifies $(B_1,\, B_2)$ with $(B_1\tau,\, B_2\tau^{-1})$ for all $\tau\,\in\, T$.

We now recall a result of \cite{HuJ}.

\begin{proposition}[{\cite{HuJ}}]
The symplectic quotient $C/\!\!/T_{ad}$ of the space of ``imploded'' representations $C$ of the fundamental group of
the punctured desingularised curve $X_0$ is given by
\begin{align}
M_0 \,=\,\{(A ,\,B_1,\, & B_2,\, (C_i,\, D_i)_{i= 1,\cdots ,g-1})\,\in\, \mathcal{A}\times
{\rm SU}(n)^{2g}\nonumber\\
&\big\vert\ (\prod_{i=1}^{g-1}[C_i,D_i])B_1A B_1^{-1}B_2A^{-1}B_2^{-1}\,=\, 1\}/\simeq\, ,
\label{ae}
\end{align}
where 
$$(A,\,B_1,\, B_2,\,(C_i,\, D_i))\,\simeq\,(A,\, {\bf g}B_1\tau \mu,\, {\bf g}B_2\tau\nu,\, ({\bf g}C_i{\bf g}^{-1},\,
{\bf g}D_i{\bf g}^{-1}))$$
for ${\bf g}\,\in\, {\rm SU}(n), \, \tau\,\in\, T$ and $\mu, \, \nu \,\in\,
[{\rm Stab}(A),\, {\rm Stab}(A)]$. 
\end{proposition}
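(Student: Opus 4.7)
The plan is to realize $M_0$ as the symplectic quotient $C/\!\!/T_{ad}$ in three stages: imposing the moment-map constraint, absorbing the reversal $R$ by a Weyl change of framing at the second puncture, and matching the residual identifications to those defining $M_0$.

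First, I would use the previous theorem to identify the moment map of the $T\times T$-action on $C$ in \eqref{EC} with the pair $(A_1,A_2)$ of alcove elements. Restricting the moment to the anti-diagonal subtorus $T_{ad}$ and setting it to zero forces, in the alcove parametrization, the condition $A_2=R(A_1^{-1})$; this is the statement that the two alcove-restricted holonomies are inverse conjugacy classes, with $R$ appearing because the inverse of a decreasing diagonal must be reversed to lie back in the alcove $\Delta$. Writing $A:=A_1$, the level-zero set is therefore parametrized by $(A,B_1,B_2,(C_i,D_i))\in \mathcal A\times {\rm SU}(n)^{2g}$, with $A_2$ determined as $R(A^{-1})$.

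Second, I would choose a representative $w_0\in {\rm SU}(n)$ of the longest Weyl element, so that for any diagonal $X\in T$ one has $R(X)=w_0 X w_0^{-1}$. The substitution $B_2 \mapsto B_2 w_0$ then converts the local factor at the second puncture via
$$B_2 A_2 B_2^{-1}\,=\,B_2\,R(A^{-1})\,B_2^{-1}\,=\,(B_2 w_0)\,A^{-1}\,(B_2 w_0)^{-1},$$
so that the defining word of $Conn$ transforms into
$$\Bigl(\prod_{i=1}^{g-1}[C_i,D_i]\Bigr)\,B_1 A B_1^{-1}\,(B_2 w_0)\,A^{-1}\,(B_2 w_0)^{-1}\,=\,1,$$
which, after relabelling $B_2 w_0$ as $B_2$, is precisely the relation appearing in \eqref{ae}.

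Third, I would rewrite each residual equivalence relation of $C/\!\!/T_{ad}$ in the new coordinates. The ${\rm SU}(n)$-conjugation from $Conn$ commutes with the right multiplication by $w_0$, producing the factor $\mathbf g$ in $M_0$. The implosion identification on $B_2$ is originally by the commutator of ${\rm Stab}(R(A^{-1}))=w_0\,{\rm Stab}(A)\,w_0^{-1}$; conjugation by $w_0^{-1}$ carries this to $[{\rm Stab}(A),{\rm Stab}(A)]$, yielding the $\nu$-factor in $M_0$. Lastly, the anti-diagonal torus action $(B_1,B_2)\mapsto (B_1\tau,B_2\tau^{-1})$, once composed with the Weyl substitution, becomes right multiplication of both $B_1$ and the new $B_2$ by a common element of $T$ (after the automorphism $\tau\mapsto w_0^{-1}\tau^{-1}w_0$ of $T$), matching the $\tau$-factor in $M_0$.

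The main obstacle, and the single delicate computational point, is this last item: verifying that the anti-diagonal action on $(B_1,B_2)$ descends, after the substitution $B_2\mapsto B_2 w_0$ and the Weyl reparametrization of $T$, to the simultaneous right multiplication by one element of $T$ stated in $M_0$. The key fact is that $R$ is an involution of $T$ induced by inner conjugation in ${\rm SU}(n)$, so the twist it introduces is absorbed by a change of variables on the quotient torus. Once this step is in place, the three residual identifications of $C/\!\!/T_{ad}$ are exactly the three types of equivalence defining $M_0$, and the proposition follows.
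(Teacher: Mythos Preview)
The paper does not give a proof of this proposition; it is quoted as a result of \cite{HuJ} and is followed only by interpretive remarks (on the meaning of $B_1^{-1}B_2$ as parallel transport) rather than an argument. So there is no ``paper's own proof'' against which to compare your attempt.

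That said, your outline is the natural way to verify the statement directly from the definitions: impose the moment-map level (forcing $A_2=R(A_1^{-1})$), absorb the reversal $R$ by the Weyl substitution $B_2\mapsto B_2w_0$, and then track the three residual equivalences (global ${\rm SU}(n)$-conjugation, implosion at each puncture, and the $T_{ad}$-action) through this change of variables. You are right that the only genuinely delicate step is the last one, and your identification of the mechanism---that $R$ is realised by inner conjugation by $w_0$, so the twist on the torus factor is an automorphism of $T$ and can be absorbed by a reparametrisation of the quotienting torus---is exactly what makes it work. One small caution: be careful with the precise definition of $T_{ad}$ the paper is using (cf.\ the later holomorphic discussion, where the antidiagonal is written explicitly as $\{((t^1,\ldots,t^n),(t^n,\ldots,t^1))\}$, i.e.\ it already incorporates $R$), since the bookkeeping of whether the action on $B_2$ is by $\tau^{-1}$ or by $R(\tau)$ affects how the reparametrisation goes through.
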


The matrix $B^{-1}_1B_2$ in \eqref{ae} represents parallel transport along the glued curve, from one side of the node to the 
other. Under the action of $t\,\in\, T$ it is mapped to $t^{-1}B^{-1}_1B_2t$.
Since $B_i$ represents parallel transport from the base point to the $i$--th puncture, it
follows that $B^{-1}_1B_2$
represents parallel transport from the first puncture to the second one. Once these two punctures have been glued,
so that the glued point corresponds to a pinched cycle, $B^{-1}_1B_2$ is the transport from one side of the
pinched cycle to the other side, the long way round.

For generic $A$ (meaning distinct eigenvalues for it), the equivalence in the quotient basically 
matches the corresponding eigenspaces at the two punctures, and gives us a well defined 
bundle on $X_0$, with the fiber over the singular point decomposing into a direct sum of 
eigenspaces, with eigenvalues on each branch of the curve that are inverses of each other. 
When the eigenvalues are not distinct, the identification is only one of the top exterior 
powers of eigenspaces, and so there is no well defined bundle, rather only a bundle on both branches of 
the curve at the singular point, with only a partial identification, of the top exterior 
powers of the respective eigenspaces.

Returning to the family of curves in \eqref{fx} over the disk $U$, we have, symplectically, a picture of
the family $\M$ of moduli spaces for which we would like the following: the space $M_t$ is the
space of ${\rm SU}(n)$ representations of the fundamental group of $X_t$
over $t\,\neq\,0$, and over $t\,=\,0$, the space $M_0$ is the space of ``imploded'' representations of the fundamental group of the punctured
desingularised curve with ``opposite" holonomy at the two punctures, with some extra glueing parameters associated
to the punctures.

We note that $X_0$ is the curve obtained by pinching to zero the
curves $\gamma_t$, and that these curves can be realized as part of a symplectic basis $c_i,\, d_i$ for the
fundamental group, say as the path $c_g$. In the limit, $c_g$ corresponds to the path around the puncture
along $y\,=\,0$ in $X_0$, and $c_g^{-1}$ to the path around $x\,=\,0$, with $d_g$ being the path linking the
two punctures on the desingularisation. To obtain a uniform construction of $M_t$, we describe $M_t$ for
$t\,\neq\, 0$ by using trivializations at two points, instead of one, and describe the representation
as usual by parallel transport around the cycles or between the base points. Let the first point $p_1$
be the standard base point, and let the second $p_2$ be the base point near the eventual puncture. With this, the
matrix $A$ will correspond to the cycle $c_g$ now originating at $p_2$, the matrix $B_1$ to transport from
$p_1$ to $p_2$, and the matrix $B_2$ to the path along $d_g$, starting at $p_1$ and ending at $p_2$. Then
the representation space is 
\begin{align}M_t \,=\,\{(A ,&\,B_1, \, B_2, \,(C_i,\, D_i)_{ i= 1,\cdots ,g-1} )\in\, \mathcal{A}\times
{\rm SU}(n)^{2g}\nonumber\\ &\big\vert \,\,\, (\Pi_i[C_i,D_i])B_1A B_1^{-1}B_2A^{-1}B_2^{-1}
\,=\, 1\}/\simeq\, .\nonumber\end{align}
with now the equivalence relation being 
$$(A ,\,B_1,\,B_2,\,(C_i,\, D_i))\,\simeq\,(A,\, {\bf g}B_1\mu,\, {\bf g}B_2\mu,\, ({\bf g}C_i{\bf g}^{-1},\,
{\bf g}D_i{\bf g}^{-1}))$$
for ${\bf g}\,\in\, {\rm SU}(n)$ and $\mu \,\in\, {\rm Stab}(A)$.
For $A$ generic this is the same equivalence relation as for $M_0$, since ${\rm Stab}(A)\,=\,
T$ and $[{\rm Stab}(A),\, {\rm Stab}(A)]\,=\,1$; in general, one sees that at $M_0$, instead of quotienting
by ${\rm Stab}(A)$, one is quotienting by the bigger group 
$$T\times [{\rm Stab}(A),\, {\rm Stab}(A)]\times [{\rm Stab}(A), \,{\rm Stab}(A)]\, .$$

Now we can build the space $\M$ in a uniform way as follows: set
\begin{align}
\M \,=\, \{(A ,\,B_1&, B_2,\, (C_i,\, D_i)_{ i= 1,\cdots ,g-1},\, t)\,\in\, \mathcal{A}\times
{\rm SU}(n)^{2g}\times U\nonumber\\
&\,\big\vert\,\,\, (\Pi_i[C_i,D_i])B_1A B_1^{-1}B_2A^{-1}B_2^{-1}
\,=\, 1\}/\simeq\label{MM-sympl}
\end{align}
with the equivalence relation being 
$$(A ,\,B_1,\,B_2,\, (C_i,\, D_i), \,t)\,\simeq\,(A,\, {\bf g}B_1\mu,\,{\bf g}B_2\nu,\, ({\bf g}C_i{\bf g}^{-1},\,
{\bf g}D_i{\bf g}^{-1}),\,t)$$
where
\begin{itemize}
\item ${\bf g}\,\in\, {\rm SU}(n)$,

\item $\mu \,=\,\nu \,\in\, {\rm Stab}(A)$ at $t\,\neq\, 0$, and 

\item at $t\,=\, 0$, $$\mu \,= \,\tau\mu'\,\ \ \text{ and }\,\ \ \nu \,= \,\tau\nu',$$ where 
$\tau\,\in\, T$ and $\mu',\, \nu' \,\in\, [{\rm Stab}(A),\, {\rm Stab}(A)]$.
\end{itemize}

We can also consider the case where the desingularisation $\widetilde X_0$ of $X_0$ is disconnected, giving two 
components $X_{0,1},\, X_{0,2}$ of genus $h$ and $g-h$, each with one puncture. Again, we have framed connection spaces
\begin{align}\mathcal C_1 &\,=\,\{(A ,\, B ,\,(C_i,\, D_i)_{ i= 1,\cdots ,h} )\,\in\,
{\rm SU}(n)^{2h+2}\,\,\big\vert\,\,
(\Pi_i[C_i,\,D_i])B A B ^{-1}\,=\, 1\}/{\rm SU}(n),\nonumber\\
\mathcal C_2 &\,=\,\{(A ,\,B ,\,(C_i, \,D_i)_{ i= 1,\cdots ,g-h} )\,\in\, {\rm SU}(n)^{2(g-h)+2}
\,\,\big\vert\,\, (\Pi_i[C_i,\,D_i])B A B^{-1}\,=\, 1\}/{\rm SU}(n).\nonumber\end{align}

We can take the implosion associated to the puncture for each of these spaces; they will have
Hamiltonian $T$ actions associated to each of their punctures, and so one can reduce, as above, by
the diagonal $T$. We get 
\begin{align}
M_0 \,=\,\bigg\{(A ,\,B_1,\,B_2,\, &(C_i,\, D_i)_{ i= 1,\cdots ,g} )\,\in\, \mathcal{A}\times
{\rm SU}(n)^{2g}\, \mid\, (\Pi_{i=1}^h[C_i,D_i])B_1A B_1^{-1}
\nonumber\\
&= \,1 \,=\, (\Pi_{i=h+1}^g[C_i,\,D_i])B_2A^{-1}B_2^{-1}\bigg \}/\simeq
\nonumber\end{align}
where 
$$
(A ,\,B_1,\, B_2,\,(C_i,\, D_i)_{i=1,\cdots ,h},\,(C_i,\, D_i)_{i=h+1,\cdots ,g})\,\simeq
$$
$$
(A,\, g_1B_1t \mu, \,g_2B_2t\nu,\, (g_1C_ig_1^{-1},\, g_1D_ig_1^{-1})_{i=1,\cdots ,h},\,
(g_2C_ig_2^{-1},\,g_2D_ig_2^{-1})_{i=h+1,\cdots ,g}))
$$
for $g_1,\, g_2\,\in\, {\rm SU}(n),\, \tau\,\in \,T$ and $\mu, \,\nu \,\in\, [{\rm Stab}(A),\, {\rm Stab}(A)]$.

Again, it is possible to give a construction of $M_t$, $t\,\neq\, 0$, and of $\M$ along the same lines.

\section{Holomorphic models for parabolic moduli}\label{se3}

\subsection{Parabolic bundles and fixed weights on $\widetilde X_0$}

We would like to have a holomorphic model for these spaces. On the curves $X_t$, $t\,\neq\, 0$, we are simply 
dealing with the moduli space of polystable vector bundles, equipped with an ${\rm SL}(n,\C)$ structure, that is a holomorphic volume form. We turn to considering our bundles and connections over the curve 
$X_0$, and also over its desingularisation $\widetilde X_0$.
 
Our holonomies at the punctures live in the fundamental alcove for the group. Writing this alcove $\mathcal A$ as the set of 
$$A \,=\,\exp(-2\pi\sqrt{-1}
\vec\alpha) \,=\,\exp(-2\pi\sqrt{-1} diag(\alpha_1,\, \alpha_2,\,\cdots , \,\alpha_n))\, ,
$$
where $(\alpha_1,\, \alpha_2,\,\cdots , \,\alpha_n)\,\in\, {\mathbb R}^n$ belongs to the simplex 
$$ 
\Delta = \{(\alpha_1,\, \alpha_2,\,\cdots , \,\alpha_n))\,\,\big\vert\,\,\alpha_1\,\geq\,\alpha_2\,\geq\,\cdots \,\geq\,\alpha_n\,\geq \, \alpha_1 -1,\, \sum_{i=1}^n
\alpha_i\,=\, 0\, \},$$
giving $\mathcal A \,\simeq \,\Delta$. We have that the logs of the holonomies correspond in the holomorphic category to
the {\it weights} for a parabolic structure. The simplex $\Delta$ divides into faces $\Delta^{(I, k)}$ indexed by
$$(I,\,k) \,=\, ((I_1,\, I_2,\,\cdots,\, I_\ell),\, k),\ 0\,<\,I_1\,<\, I_2\,<\,\cdots\,<\,I_\ell\,=\,n,\
k\,=\,0,\,1\, ,$$ where $k\,=\,0$ if $\alpha_1\,<\,\alpha_n+1$,\, $\,k\,=\,1$ if $\alpha_1 \,=\, \alpha_n+1$, and
the $I_j-I_{j-1}$ are the multiplicities of the $\alpha_i$ so that we have the pattern of multiplicities 
$$\alpha_1\,=\,\ldots\,=\,\alpha_{I_1}\,>\, \alpha_{I_1+1}\,=\,\ldots\,= \, \alpha_{I_2}
\,>\, \alpha_{I_2+1}\,=\,\cdots\,=\,\alpha_{I_{\ell -1}}
$$
$$
> \,\alpha_{I_{\ell-1} +1}\,=\,\ldots\, =\,\alpha_{I_\ell}
\,=\,\alpha_n\, .$$
Note that away from $\alpha_n \,=\, \alpha_1-1$, the eigenvalue multiplicities in $\mathcal A$ and $\Delta$ are the
same; but on $\alpha_n\,=\, \alpha_1-1$, different eigenvalues for $\vec\alpha$ can give the same eigenvalue for $A$.

Correspondingly, we decompose $C$ in \eqref{eqcon} as a union of strata
\begin{equation}\label{l1}
\bigsqcup_{(I^1, k^1), (I^2, k^2)} Conn_{(I^1, k^1), (I^2, k^2)},
\end{equation}
obtained by fixing the eigenvalue multiplicities of the holonomies $A_i$
around the punctures $x_i$. As we are interested in the case where $A_1\,=\,A_2^{-1}$, and so
$\vec\alpha^1\,=\, -\vec\alpha^2$, or more properly 
$$\vec\alpha^1\,=\, -R(\vec\alpha^2)\, ,$$
where $R$ is the reversal symmetry, we will be interested in strata $\Delta^{(I_1, k_1)}\times
\Delta^{(I_2, k_2)}$ with $k_1\,=\, k_2$, and $I^1\,=\, \widehat R(I^2)$; here $\widehat R$ is the
involution induced by $R$. Set 
$$
\Delta^R \,=\, \bigsqcup_{(I , k )}\Delta^{(I , k )}\times\Delta^{(\widehat R(I), k)}\, .
$$
Note that the generic stratum is included in $\Delta^R$. In a similar vein, consider over
$\Delta^{(I_1, k_1)}\times\Delta^{(I_2, k_2)}$ the stratum 
$Conn_{(I^1, k^1), (I^2, k^2)}$ (see \eqref{l1}) with $k^1\,=\, k^2$ and $I^1\,=\, \widehat R(I^2)$. We will thus use just the
index $(I^1,\, k^1)$ associated to $x_1$ and refer to these strata as $Conn_{(I^1, k^1)}$. We set 
$$
Conn^R \,=\, \bigsqcup_{(I , k )}Conn_{(I, k)}\, .
$$
Again, the generic stratum is included in $Conn^R$.

On the desingularisation $\widetilde X_0$, fixing the $\alpha_i$, for $\alpha_n\,>\, \alpha_1 -1$ (so that $k\,=\,0$), and quotienting out the framings at the
punctures (in essence, this is a symplectic reduction) we have, by a well known theorem of Mehta and Seshadri \cite{MS},
a map from the corresponding space of connections to a holomorphic moduli space of 
parabolic ${\rm SL}(n,\C)$ vector bundles $E$ with parabolic weights
$\alpha_{I_1}\,>\,\alpha_{I_2}\,>\,\cdots \,>\,\alpha_{\ell}$ (with $\alpha_{I_j}$ of multiplicity $I_j-I_{j-1}$, so that $I_0 =0, I_\ell = n$) attached to a flag of subspaces of $E_{x_1} $ of dimensions
$I_1,\, I_2,\,\cdots,\, I_\ell $ at $x_1$, and opposite parabolic
weights $-\alpha_{ \ell} \,>\, -\alpha_{\ell-1} \,>\,\cdots \,>\,-\alpha_{1}$ attached to a flag
at $x_2$ with dimensions $ I_{\ell} -I_{\ell-1},\, \cdots,\, I_{\ell}-I_1,I_\ell$.

For $k\,=\,1$ (so $\alpha_n\,=\, \alpha_{I_\ell}\,= \,\alpha_{I_1}-1 \,=\,\alpha_1-1$), the sheaf, as shown in 
\cite{HuJS2}, if it corresponds to a closed semistable orbit in the GIT quotient (other sheaves will degenerate 
to such closed orbits), actually acquires a torsion component at $x_1$ of length $t^1 \,=\,n-I_{\ell-1}
\,=\,I_\ell-I_{\ell-1}$ and at $x_2$ a torsion component of length 
$t^2\,=\, I_1$, and the ${\rm SL}(n,\C)$ structure degenerates, vanishing at $x_1,\, x_2$ to
order $I_\ell-I_{\ell-1}$,\, $I_1$
respectively. Set $$E'\,=\, E/{\rm torsion};$$ so $E'$ has degree $-I_1 - I_\ell+I_{\ell-1}$. The flags 
shift in consequence: $E'$ has a flag
at $x_1$ of dimension $I_1+t^1, I_2 + t^1, \cdots$, corresponding to shifted weights $\alpha_n+1
\,=\, \alpha_{I_1}\,>\,\alpha_{I_2}\,>\,\cdots \,>\,\alpha_{I_{\ell-1}}$,
and a flag at $x_2$ of dimension $I_\ell-I_{\ell-1}+t^2, \,I_\ell-I_{\ell-2}+ t^2,\,\cdots$, corresponding to
shifted weights $-\alpha_{I_1}+1\,=\, -\alpha_{I_\ell}\,>\,-\alpha_{I_{\ell-1}}\,>\,\cdots \,>\,-\alpha_{I_2}$.
Thus, as $\alpha_1- \alpha_n$ goes to $1$, the degree zero 
${\rm SL}(n, {\mathbb C})$ bundles acquire a fixed torsion piece, and we are left, modulo torsion, with a bundle of
degree $-t^1-t^2$, 
an ${\rm SL}(n,\C)$-structure that degenerates at the support of the torsion, and a shifted flag. The torsion 
piece does not contribute to the modulus, and is basically ``filler''.

The successive quotients of the flags at $x_1$ or $x_2$ correspond to the eigenspaces 
associated to the distinct eigenvalues of $\vec\alpha$ at $x_1$ and of $-\vec\alpha$ at $x_2$, 
with the first quotient corresponding to the largest eigenvalue, and so on.

\subsection{Varying weights on $\widetilde X_0$}

Now let the weights vary. Symplectically, when we wanted to consider all possible $A$, we were faced with the issue of 
varying eigenvalues and varying their multiplicities; in moving to the space $C$ in \eqref{EC}, we incorporated not only
variations in $A$, 
but in addition the dual data of a torus-valued framing. The holomorphic version of this was discussed in the paper 
\cite{HuJS2}; the answer, holomorphically, consists in replacing the flags $E^i_{d_\ell}$ by decomposable elements 
$\beta^i_j$, $\ i\,=\, 1,\,2$, $\ j \,=\, 1,\,\cdots ,\,n$, of $\bigwedge^j(E\big\vert_{x_i})^*$. In essence, this is
the Pl\"ucker correspondence: the correspondence with the flag is given by 
$$E^i_{d_\ell}\,=\, Ann(\beta^i_{n-d_\ell})$$
for $\beta^i_{n-d_\ell} \,\neq\, 0$; the complex scale of these elements, i.e., both radius (encoding weights) and 
arguments (encoding framing), gives us our extra data. Whereas before we had a torus, acting on the framings, we now 
have its complexification, acting on $\beta^i_j$. In the symplectic picture, the norm of $\beta^i_j$ should be 
thought of as the difference $\alpha^i_j - \alpha^i_{j+1}$ of successive weights. In the holomorphic set-up, while 
we do not have the actual values of the weights, the stratification by multiplicity of the eigenvalues (vanishing of 
the $\beta^i_j$) actually does make sense, as it encodes the type of flag.

Indeed, one of the features of the construction is that the $\beta^i_j$ are allowed to vanish as one moves in the 
moduli space, expressing the change in the holomorphic framework from eigenvalues of lower multiplicity to higher 
multiplicity (i.e., merging of eigenvalues). Full flags correspond to all the $\beta^i_j$ being non-zero; if 
$\beta^i_{n-d}\,=\,0$, there is no subspace of dimension $d$ in the flag at $x_i$. Also to have flags (nested 
subspaces) instead of random subspaces, the successive non-zero $\beta^i_j,\ \beta^i_{j'}$ must be {\it compatible}, 
in the sense that there are $\gamma^i_{j,j'}\, \in\, \bigwedge^{j'-j}(E\big\vert_{x_i})^*$ with
\begin{equation}\label{eg}
\beta^i_{j'}\,=\, \gamma^i_{j,j'}\wedge \beta^i_{j}\, .
\end{equation}
The $\gamma^i_{j,j'}$ will be what defines the volume form on $E^i_{n-j}/ E^i_{n-j'}$. Finally, the top term 
$\beta^i_n$ should be compatible with the ${\rm SL}(n,{\mathbb C})$ structure (see \cite{HuJS2}); this says that it 
encodes the top exterior form on the bundle at the point.

We will have a stratified moduli space --- stratified according to the vanishing pattern of the $\beta^i_j$ --- and this 
corresponds to the multiplicity patterns $I_1\,<\, I_2\,<\,\cdots\,<I_\ell$ of the eigenvalues. There is one more
$\beta$ that can vanish and that is $\beta_n$; this will
correspond to $k\,=\,1$, that is when $\alpha_1\,=\,\alpha_n+1$. Recall that we said 
that the bundles could acquire torsion; indeed, on these strata, the closed orbits in the GIT problem are those 
where this torsion is as big as it can get, and with rank given at $x_1$ by the multiplicity of $\alpha_n$, that is 
$I_\ell-I_{\ell-1}\,=\, n-I_{\ell-1}$, and at $x_2$ by the multiplicity of $-\alpha_1$, that is $I_1$. (Note that the 
${\rm SL}(n,\C)$ structure still defines a volume form on the fibers of $E/{\rm torsion}$ at the $x_i$.) Even when 
$k=0$ (no torsion), the multiplicity pattern yields that $$\beta^1_{I_\ell-1},\,\cdots,\, \beta^1_{I_\ell-I_{1}-1}\,=\, 0,\,\ \
\beta^1_{1},\,\cdots,\, \beta^1_{I_\ell-I_{\ell-1}-1}\,=\,0,$$ as well as
$$\beta^2_{n-1},\,\cdots ,\,\beta^2_{n-I_{\ell}
+I_{\ell-1}-1}\,=\, 0,\,\ \ 
\beta^2_{1},\,\cdots,\,\beta^2_{I_1-1}\,=\,0;$$ when $k\,=\,1$, in addition, $\beta^1_n\,=\, \beta^2_n\,=\,0$. This
unites the strings of 
vanishing $\beta$ (cyclically) at the two points, so that there is a run at both points $x_1$ and $x_2$ of vanishing 
$\beta$ corresponding to the fact that the values of $\exp(\alpha_1),\, \exp(\alpha_n)$
(and so of $\exp(-\alpha_1),\,
\exp(-\alpha_n)$) are the same, and giving a common eigenspace of dimension $I_1+ I_\ell-I_{\ell-1}$. The same holds 
at $x_2$.

At $x_1$, as noted above, we have a torsion submodule of dimension $t^1\,=\, n-I_{\ell-1}$, which turns out to be necessarily 
of the form $\C_{x_1}^{t^1}$. Suppose that $s_1,\, s_2,\,\cdots ,\, s_{t^1} $ generate this torsion, then stability 
forces $\beta^1_{t^1}(s_1,\, s_2,\,\cdots,\, s_{t^i}) \,\neq \,0.$ This allows us to define for $j\,>\,t^1$ forms on 
$E/$torsion at $x_1$
$$\widetilde \beta^1_{j-t^1}(v_1,\cdots, v_{j-t^1}) \,=\, \beta^1_{t^1}(s_1,\, s_2,\,\cdots,\, 
s_{t_i}, v_1,\cdots, v_{j-t^1}).$$
Taking annihilators, instead of spaces of dimension $I_1,\,\cdots,\,I_\ell$, as noted, one 
has spaces of $E/$torsion at $x_1$ of dimension $I_1+t^1,\, I_2 + t^1,\,\cdots $, corresponding to the shifted weights 
$\alpha_n+1\,=\, \alpha_{I_1}\,>\,\alpha_{I_2 }\,>\,\cdots\,>\,\alpha_{I_{\ell-1}}$.

At $x_2$, in a similar fashion, we have a torsion submodule of dimension $t^2\,=\, I_1$, with a 
flag on $E/$torsion at $x_2$ of dimension $I_\ell-I_{\ell-1}+t^2,\, I_\ell-I_{\ell-2}+ 
t^2,\,\cdots$, corresponding to the shifted weights $-\alpha_{I_1}+1\,=\, 
-\alpha_{I_\ell}\,>\,-\alpha_{I_{\ell-1}} \,>\,\cdots\,>\,-\alpha_{I_2 }$. Note that this 
procedure preserves the symmetry of the strata. If we are in the general case, with no 
symmetry in the strata imposed between $x_1$ and $x_2$, then the situation for $x_1$ is 
simply reproduced at $x_2$.

The strata with $k\,=\,1$ in the moduli space will then correspond to vector bundles with a shifted degree, shifted 
weights, and induced flags. It is as if they were obtained from the degree zero case by a Hecke transform, except 
that the torsion ``mechanism'' obviates the need to have a subspace along which to take the transform. The ${\rm 
SL}(n,\C)$ structure vanishes at the $x_i$ with multiplicity $t^i$, but if one divides by the $t^i$-th power of the 
coordinate vanishing at $x_i$, one still obtains a top form at the points $x_i$.

The results of \cite{HuJS2} give us a moduli space $\mathcal C$ of framed parabolic {\it 
sheaves} over $\widetilde X_0$; the sheaves are torsion-free for weights 
$\alpha_1\,<\,\alpha_n+1$, but are allowed to acquire torsion (indeed forced to, if they 
correspond to a closed orbit in the GIT set-up) if we have $\alpha_1 \,=\, \alpha_n+1$ and 
$\beta^i_n\,=\, 0$.

\begin{theorem}[{\cite{HuJS2}}]\label{thm3.1}
There is a space $\mathcal C$ of semistable framed parabolic sheaves over $\widetilde X_0$, diffeomorphic to $C$ of \eqref{EC} under 
the Narasimhan-Seshadri correspondence. The elements of $\mathcal C$ are strings $({\mathcal E},\, \beta^i_j,\, i\,=\, 1,\,2,\, j\,= 
\,1,\,\cdots,\,n)$; there is a subdivision of $\mathcal C$ into strata $\mathcal C(I^1, k^1; I^2, k^2)$ according to the vanishing 
pattern of the $\beta^i_j$ such that the following hold:
\begin{itemize}
\item $\mathcal E$ is a coherent sheaf of rank $n$ over $X_0$, locally free if $k^1\,= \,k^2 \,=\, 0$, and in general having 
torsion over $x_1,\,x_2$ only; for a closed orbit in the GIT parametrization the torsion is isomorphic to a 
skyscraper sheaf $\C_{x_1}^{t^1} \oplus \C_{x_2}^{t^2} $ for integers $t_1\,=\, I^1_{\ell^1}- I^1_{\ell^1-1}$ and $t^2\,=\, 
I^2_{\ell^2}- I^2_{\ell^2-1}$. The sheaf $\mathcal E$ is equipped with an ${\rm SL}(n, {\mathbb C})$ structure in 
the sense of \cite{HuJS2}, that is, an identification $\bigwedge^{\rm max}({\mathcal E}/{\rm torsion})\, \simeq\, 
{\mathcal O}_{X_0}(-T)$, with $T$ being the divisor $t^1x_1 + t^2x_2$. If the torsion
part vanishes, this is just a standard ${\rm SL}(n, {\mathbb C})$ structure.

\item $\beta^i_j$ are decomposable elements of $\bigwedge^j(\mathcal E\big\vert_{x_i})^*$; the 
elements $\beta^i_j$ for a fixed $i$ are assumed to be compatible (see \eqref{eg}), and $\beta^i_n$ are 
supposed to be compatible with the ${\rm SL}(n,{\mathbb C})$ structure. If there is torsion at 
$x_i$, this implies that $\beta^i_n\,=\,0$. More precisely, the terms satisfy 
$$\beta^i_{1}\,=\,\beta^i_{2}\,=\,\cdots \,=\,\beta^i_{t^i-1}\,=\, 0,$$ if the torsion has 
rank $t_i$; furthermore, if $s_1, \,s_2,\,\cdots,\, s_{t_i}$ generate the torsion at $x_i$, then 
$\beta^i_{t_i}(s_1,\, s_2,\, \cdots,\, s_{t_i}) \,\neq\, 0.$

\item Taking the annihilators of the $\beta^i_j$ defines flags at $x_i$, and the forms $\beta^i_j$, combined with 
the ${\rm SL}(n,\C)$ structure, define volume forms on the subquotients of the flag.
\end{itemize}
\end{theorem}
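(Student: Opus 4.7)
The plan is to build the correspondence stratum by stratum, starting from Mehta-Seshadri and then handling the boundary face $k=1$ via a GIT/Hecke argument.

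First, I would treat the open stratum where both weight systems lie in the interior of the alcove (that is, $k^1=k^2=0$ and no multiplicities collapse). On each such stratum, the construction $C_{(I^1,0),(I^2,0)}\to\mathcal{C}(I^1,0;I^2,0)$ is essentially Mehta-Seshadri \cite{MS}: a flat $\mathrm{SU}(n)$-connection with prescribed conjugacy classes $A_1,A_2$ of holonomy at $x_1,x_2$ produces a polystable parabolic bundle $E$ on $\widetilde{X}_0$ with flag type $(I^1,I^2)$ and weights read off from the $\vec\alpha^i$. The framing data remembered by $C$, namely the components $B_1,B_2$ of the gauge at the punctures modulo $[\mathrm{Stab}(A_i),\mathrm{Stab}(A_i)]$, is exactly the information of a unit volume element on each successive quotient of the flag; equivalently, under the Plücker embedding, this is the data of non-zero decomposable elements $\beta^i_j\in\bigwedge^j(E|_{x_i})^*$ for $j\in\{I^1_1,\ldots,I^1_{\ell^1}\}$ (and analogously at $x_2$), compatible in the sense of \eqref{eg} and compatible at $j=n$ with the $\mathrm{SL}(n,\mathbb{C})$ structure. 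On this stratum the map is a smooth bijection by the Mehta-Seshadri theorem and the standard identification of framings with Plücker coordinates, hence a diffeomorphism onto $\mathcal{C}(I^1,0;I^2,0)$.

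Next, I would allow multiplicities to collapse while keeping $k=0$. The point is that letting $\beta^i_j$ vanish corresponds exactly to shrinking the flag by removing the corresponding subspace. The key consistency check is that the gauge-theoretic quotient by $[\mathrm{Stab}(A_i),\mathrm{Stab}(A_i)]$ in the implosion remembers only the top exterior power of each generalized eigenspace, which is precisely the information encoded by the compatible system $\{\beta^i_j\}$ with the prescribed vanishing pattern. This gives the stratification of $\mathcal{C}$ by vanishing pattern and matches it bijectively with the stratification of $C$ by eigenvalue multiplicity; the bijection is continuous as strata merge because in the symplectic picture the collapse of eigenvalues is exactly mirrored by $\beta^i_j\to 0$.

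The main obstacle is the boundary face $k=1$, where $\alpha_1=\alpha_n+1$, because here the naïve Narasimhan-Seshadri assignment breaks down: shifting weights across an integer forces a Hecke modification, and only the closed semistable GIT orbits survive in the quotient. I would handle this by following \cite{HuJS2}: on this face, one shows that in the semistable locus, passage to the closed orbit in the GIT quotient produces a sheaf $\mathcal{E}$ whose torsion submodule at $x_i$ has rank exactly $t^i$ (as dictated by the multiplicities of the extremal eigenvalues), with $\mathcal{E}/\mathrm{torsion}$ being a bundle of degree $-t^1-t^2$ equipped with a degenerate $\mathrm{SL}(n,\mathbb{C})$ structure vanishing to order $t^i$ at $x_i$, i.e.\ an identification $\bigwedge^{\max}(\mathcal{E}/\mathrm{torsion})\simeq\mathcal{O}_{\widetilde{X}_0}(-t^1x_1-t^2x_2)$. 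The conditions $\beta^i_n=0$, $\beta^i_1=\cdots=\beta^i_{t^i-1}=0$ and $\beta^i_{t^i}(s_1,\ldots,s_{t^i})\neq 0$ fall out of stability: stability forces the torsion generators to pair non-trivially with $\beta^i_{t^i}$, and the induced $\widetilde\beta^i_{j-t^i}$ on $\mathcal{E}/\mathrm{torsion}$ yield the shifted flag with weights $\alpha_{I_1},\ldots,\alpha_{I_{\ell-1}}$ at $x_1$ and the symmetric data at $x_2$. One then verifies that this assignment glues continuously to the $k=0$ locus along the wall $\alpha_1-\alpha_n\to 1$, so the strata fit together into a single topological space $\mathcal{C}$.

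Finally, to conclude that the global map $C\to\mathcal{C}$ is a diffeomorphism, I would invoke stratum-wise smoothness from the classical Mehta-Seshadri theorem together with the explicit description of the wall-crossing just outlined to check compatibility across strata; transverse smoothness at the boundary $k=1$ uses the Hecke-modification picture to identify normal directions in the moduli of parabolic sheaves with the framing directions in $C$ that are killed by the extra quotient $[\mathrm{Stab}(A),\mathrm{Stab}(A)]^2$. The delicate point throughout, and the one I would expect to require the most care, is checking that the GIT semistability condition on the sheaf side matches the imploded symplectic quotient condition on the connection side precisely on the $k=1$ stratum; this is exactly the content invoked from \cite{HuJS2}.
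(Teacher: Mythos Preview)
The paper does not prove this theorem at all: Theorem~\ref{thm3.1} is stated as a citation from \cite{HuJS2} and is followed immediately by further discussion, with no \texttt{proof} environment. So there is no ``paper's own proof'' to compare your proposal against; the authors simply import the result.

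Your outline is a reasonable sketch of the strategy one would expect in \cite{HuJS2}: Mehta--Seshadri on the interior strata, Pl\"ucker interpretation of the framings, and a Hecke/torsion analysis on the $k=1$ wall. You are right that the matching of GIT semistability with the imploded symplectic quotient on the $k=1$ stratum is the crux, and you correctly flag that this is precisely what must be extracted from \cite{HuJS2}. But note that as written your proposal is largely a narrative of what needs to be checked rather than an argument; in particular, the claim that the stratum-wise bijections glue to a global diffeomorphism (transverse smoothness across the $k=1$ wall) is asserted rather than argued, and would require the actual GIT construction of $\mathcal{C}$ as a variety, not just a set-theoretic stratification. If you intend this as a genuine proof rather than a summary, that is the gap to fill.
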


The diffeomorphism in Theorem \ref{thm3.1}
with $C$ implies that we then have a map $\mathcal C\,\longrightarrow\, \Delta\times \Delta$, with $\Delta$ 
being the simplex of weights. Again, the faces of $\Delta\times \Delta$ correspond to the pattern of vanishing of the 
$\beta^i_j$ and to the multiplicity of the torsion sheaf and so to the dimensions of the subquotients of the two flags 
obtained as the annihilators of the non-zero $\beta^i_j$; in addition, the faces with $\alpha^i_n \,=\, \alpha^i_1 -1$ 
encode the existence of torsion. Again, we are just interested in the strata with $k^1\,=\, k^2$ and index $I^1
\,=\,\widehat R(I^2)$ and will just keep the labelling of $x_1$ referring to the stratum as $\mathcal C_{(I^1,k^1)}$ or 
$C_{(I ,k )}$. For $C_{(I ,k) }$, the non-vanishing $\beta^i_j$ are, for $k\,=\,0$:
$$\beta^1_n,\, \beta^1_{n-I_1},\, \beta^1_{n- I_{2}},\, \cdots ;\ \
\beta^2_n,\,\beta^2_{I_{\ell-1}},\, \beta^2_{I_{\ell-2}}\,\cdots $$
and for $k\,=\,1$, the same set apart from $\beta^1_n, \,\beta^2_n$, which now vanish. The 
condition for stability is expressed on each $\mathcal C_{I, k}$ by saying that a framed 
bundle is stable (respectively, semistable) if and only if it is stable (respectively, 
semistable) as a parabolic vector bundle for one of the weights in the corresponding stratum of 
$\Delta\times \Delta$.

Again, we set $\mathcal C^R \,=\, \bigsqcup_{I, k} \mathcal C_{I,k}$.

\subsection{On the singular curve $X_0$: glueing}

There is a natural action of two copies of the diagonal torus $T_\C$ of ${\rm SL}(n,\C)$, 
associated to each of the punctures: if $\mu_j,\ j\,=\, 1,\,\cdots ,\,n-1$, form the 
standard basis of characters for $T$, associated to the $j$-th exterior power of the basic 
representation, then $T_C\times T_C$ acts on $\beta_i^j$ by
$$(t_1,\, t_2)(\beta^1_j,\,\beta^2_j)
\,=\,\left(\mu_j(t_1)\beta^1_j,\, \mu_j(t_2)\beta^2_j\right).$$
If $t \,=\, diag(t^1,\,\cdots ,\,t^n)$, set $\nu_{s,s'}
\,=\, \mu_t/\mu_s \,=\, 
t^{s+1}\cdot\ldots \cdot t^{s'}$; if $\beta^i_{s'}\,=\, \gamma\wedge \beta^i_s$, then the action of $t_i$ on $\gamma$ is by 
$\nu_{s,s'}(t_i)$.

Our use of the reversal permutation acting on the weights means that we now consider the anti-diagonal torus $ T_\Delta 
\,\subset\, T\times T$ given as the subgroup of elements of the form $$((t^1,\,\cdots ,\,t^n),\, \,( t^n,\, \cdots,\,
t^1))\, .$$ On 
the stratum $ \mathcal C_{I,R(I)}$, where $I \,= \,(I_1, I_2,\cdots , I_\ell)$, the
group $ T_\Delta $ acts on the top exterior products 
of the successive quotients $E_1^{ I_s}/ E_1^{I_{s-1}}$ and $E_2^{I_{\ell-s}}/ E_2^{I_{\ell-s -1}} $, i.e., what 
corresponds to the top exterior powers of the eigenspaces of $\alpha_s$ at $x_1$ and the eigenspace of $-\alpha_s$ 
at $x_2$. The action is such that the composition of the trivialization at $x_1$ and the inverse of the 
trivialization at $x_2$ (twisted by $R$) is left invariant, so that we have a well defined identification. The 
algebraic quotients $$\widetilde {\mathcal M}_0 \,=\, \widetilde{\mathcal C}^R/\!\!/T_\Delta,\ 
\ \ \mathcal M_0\,=\, \mathcal 
C^R/\!\!/T_\Delta$$ then correspond to spaces of framed parabolic bundles or sheaves, with the top exterior powers of 
the subquotients of the flags identified at the two points. One has the identification, from \cite{HuJS2}, 
$$M_0 \,=\, {\mathcal M}_0\, . $$

It is important to note that what we obtain through this quotienting process are not --- even for the generic locus 
corresponding to distinct eigenvalues --- vector bundles on $X_0$. We instead have vector bundles on $\widetilde X_0$ (alternately, 
their direct images on $X_0$), with flags at both points $x_1,\, x_2$, and with identifications of the top exterior 
powers of the subquotients of the flags on both branches. In the symplectic case, where we have a scalar product, 
this identifies the fibers at the two singular points, at least when there are full flags. Here, in the holomorphic 
case, it does not.

\subsubsection{Bundles and their twists on the family $\X$}

If we consider bundles over our family $\X$ of curves, we have a bundle over the singular curve $X_0$. However, 
referring to the previous subsection, this is not what we want; we really would like to have sheaves over $\X$ whose 
restrictions are the (partially glued) direct images on $X_0$ of bundles on the blow-up $\widetilde X_0$ of the 
curve $X_0$. Dropping the partial glueing for the moment, it would suffice to take bundles tensored by the ideal 
sheaf of the origin in $\X$; we will, to make things a bit clearer, blow up, and consider bundles on the blow-up twisted 
by the negative of the exceptional divisor.

We 
therefore blow up $\X$ at the origin, to obtain $\widetilde \X$. The exceptional divisor is denoted by $D$. The lift of $X_0$ to 
this is a curve $\widetilde X'_0$, given over $\widetilde B$ as the union of the $x$-axis, the $y$-axis, and two times 
the exceptional divisor $D$: $$\widetilde X'_0 \,\,=\,\, \widetilde X_0 + 2D.$$

Now consider a bundle $\E$ over $\X$, of degree zero on the generic $X_t$, and lift it to $\widetilde \X$. The 
restriction of $\E$ to $D$ is trivial. Suppose that we twist $\E$ by
${\mathcal O}_{\X}(-D)$. This gives a vector bundle $\E(-D)$ whose 
restriction to $D$ is ${\SO}_D(1)^{\oplus n}$, and whose restriction to $\widetilde{X}_0$ has degree $-2n$; its 
restriction to the generic $X_t$ still has degree zero. Now note that the direct image of $\E(-D)$ to $\X$ (which is 
just the subsheaf of $\E$ of sections vanishing at the origin) has the behaviour that we want; the restriction to 
$X_0$ looks like the direct image of a bundle on $\widetilde{X}_0$.

\subsubsection{Isomonodromy and the Narasimhan-Seshadri theorem}

We now consider the deformations of vector bundles over $U$ given by isomonodromy. Deforming one 
bundle with a flat connection over a curve $X_p$ to neighboring curves isomonodromically 
gives a holomorphic family: the vector bundle, in a suitable cover, can be given by constant 
(hence holomorphic) transition functions, and these of course extend to neighborhoods. What 
it does not do, however, is give holomorphic isomorphisms, even local, from the moduli 
space $M_p$ of vector bundles over $X_p$ to the moduli space $M_{p'}$ associated to a neighboring 
curve; the passage to flat unitary connections destroys the holomorphic structure in any 
family (except when they are zero-dimensional) in the fiber. It does however give a 
diffeomorphism from $M_p$ to $M_{p'}$, for $p,\,p'\,\neq\, 0$, and a map from $M_p$ to 
$M_0$, which is a diffeomorphism over a generic locus. In any case, we want isomonodromic 
deformation of the Narasimhan-Seshadri connections to map fibers to fibers over $U$. In 
other words, our putative moduli space $\M$ over the disk $U$ should have fibers over $t$ 
the moduli space $\mathcal M_t$ of bundles over $X_t$ and our moduli space $\mathcal M_0$ 
over $t\,=\,0$.

{}From the variational point of view developed by Donaldson \cite{Do}, 
Atiyah--Bott \cite{AB} and other authors, this is quite sensible: we have a space of 
partial connections over $U$, whose fiber over $t\,\neq\, 0$ is the space of connections on 
$X_t$, and whose fiber over $X_0$ is the space of connections with a parabolic singularity 
at the points $x_1,\, x_2$. The minima of the Yang-Mills functional on all these fibers are 
the moduli spaces $\mathcal M_t$ and $\mathcal M_0$. The isomonodromy deformation of the 
flat connection already places one in this minimal locus; more generally, a small 
continuous deformation of the isomonodromic deformation, for example to a small holomorphic 
deformation, should retract to the minimal locus, adapting the arguments of Daskalopoulos 
\cite{Da}. More generally, with a fibrewise definition of stability, we should have, 
putatively:

\begin{conjecture}
Suppose that the bundle $\widetilde \E$ over $\X$ is $\vec\alpha$ polystable; then there is a complex gauge 
transformation ${\mathbf A}$ continuous at the origin, such that the Chern connection of the transformed metric
${\mathbf A}H_0{\mathbf A}^*$ gives flat (partial) Chern connections along each $X_t$.
\end{conjecture}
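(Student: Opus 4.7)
The plan is to treat the conjecture as a family version of the Mehta--Seshadri theorem, combining fiberwise existence with analytic control near the degenerate fiber. Fiberwise, existence is in hand: for each $t\,\neq\, 0$, Donaldson's proof of the Narasimhan--Seshadri theorem, applied to the polystable ${\rm SL}(n,\C)$ bundle $\widetilde\E\big\vert_{X_t}$, produces a complex gauge transformation ${\mathbf A}_t$, unique up to unitary gauge, such that the Chern connection of ${\mathbf A}_t H_0 {\mathbf A}_t^\ast$ is flat; for $t\,=\, 0$, Theorem \ref{thm3.1} together with the Mehta--Seshadri theorem for parabolic bundles on $\widetilde X_0$ with weights $\vec\alpha$ yields an analogous metric on the desingularisation, with prescribed residues $\pm\vec\alpha/2$ at the punctures $x_1,\, x_2$ matching the explicit local model of Section \ref{se2.2}. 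The content of the conjecture is thus to glue these fiberwise transformations into a single ${\mathbf A}$ that is continuous across $t\,=\, 0$.

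The first step is to build the reference metric $H_0$ on $\widetilde\E$ adapted to the local picture of Section \ref{se2.2}. In the holomorphic gauge introduced there, $H_0$ should degenerate near the node like $r_x^{\vec\alpha}r_y^{-\vec\alpha}$, so that the conjectured limiting flat metric on $\widetilde X_0$ already agrees with $H_0$ on the cylindrical neck up to a bounded perturbation; away from the node, $H_0$ may be taken as any smooth family. With such a choice, ${\mathbf A}_t$ should tend to the identity on the neck as $t\,\to\, 0$, and the isomonodromic family of flat partial connections described in Section \ref{se2.2} becomes the natural candidate for the limit of the Chern connections of ${\mathbf A}_t H_0 {\mathbf A}_t^\ast$.

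I would then run a fibered Donaldson--Yang--Mills heat flow on the space of partial connections over $U$, in the spirit of Daskalopoulos \cite{Da}, with ${\mathbf A}_t$ extracted from the flow in each fiber. Polystability guarantees convergence of the flow to the unique minimizer on each $X_t$ and on the parabolic bundle over $\widetilde X_0$. The main obstacle, and the crux of the conjecture, is to prove \emph{uniform} analytic estimates as $t\,\to\, 0$: the conformal geometry of the fibers degenerates (the vanishing cycle $\gamma_t$ becomes an infinitely long neck in any complete metric, and the curvature of a fibrewise K\"ahler metric on $X_t$ is unbounded in $t$), so fiberwise estimates do not pass naively to the limit. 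The expected remedies are Uhlenbeck-type compactness on degenerating cylinders together with a neck-analysis argument comparing the flow on $Q_t\cap B$ with the flow on the pair of punctured disks at $t\,=\, 0$; both are made tractable by the fact that the limiting connection on the neck is the explicit local model of Section \ref{se2.2}. As a consistency check, one may alternatively define ${\mathbf A}_t$ via the isomonodromic family --- which extends holomorphically in $t$ to the limit --- and then invoke the diffeomorphism of Theorem \ref{thm3.1} with the symplectic moduli $\M$ of \eqref{MM-sympl} to verify that the resulting unitary representative depends continuously on $t$.
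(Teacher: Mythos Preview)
This statement is labelled a \emph{Conjecture} in the paper, and the paper does not prove it: immediately after the statement the authors write ``We leave discussion of this to another venue.'' There is therefore no proof in the paper to compare your proposal against.

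Your proposal is a reasonable programme sketch, and it is in fact aligned with the brief heuristic the paper offers just before the conjecture (fibred Yang--Mills minimisation, with a Daskalopoulos-style retraction argument). But it is not a proof, and you are candid about this: you correctly isolate the crux as the \emph{uniform} analytic estimates for the heat flow (or for the minimising sequence) as $t\to 0$, where the injectivity radius of $X_t$ collapses and the neck becomes an infinite cylinder. You then write that ``the expected remedies are Uhlenbeck-type compactness on degenerating cylinders together with a neck-analysis argument,'' but you do not carry any of this out. That step is the entire content of the conjecture; everything else (fibrewise Donaldson and Mehta--Seshadri, the local model of Section~\ref{se2.2}, the choice of reference metric $H_0$) is set-up. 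Likewise, your alternative ``consistency check'' via isomonodromy and Theorem~\ref{thm3.1} does not close the gap: isomonodromic deformation of a flat unitary connection from $X_{t_0}$ to $X_t$ produces a flat connection that is generally \emph{not} unitary on $X_t$, so you still need a complex gauge transformation back to the unitary locus, and its continuity at $t=0$ is precisely what is in question.

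In short: there is no gap relative to the paper, because the paper proves nothing here; but your proposal remains a strategy, not a proof, with the hard analytic step identified but unresolved.
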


We leave discussion of this to another venue.

\subsubsection{Isomonodromy and parabolic structures}

Meanwhile, in $\mathcal M_t$, by the Narasimhan--Seshadri theorem, the bundle is determining a representation of the fundamental group 
and so a holonomy along the vanishing cycle $\gamma_t$. As shown above, the limit for our isomonodromic family of the holonomy along 
the cycles $\gamma_t,\ t\,\neq\, 0$, gives weights $\vec\alpha,\, -\vec\alpha$ for the parabolic structures along the curve $\widetilde 
X_0$. More generally, applying the Narasimhan--Seshadri theorem to a vector bundle over the family of curves tells us that unlike the 
case of an isolated curve, the holomorphic structure of the bundle on the family $\X$ should already be determining the weights in the 
limit.

One way of making more systematic sense of this is to re-examine the meaning of a parabolic 
structure. Referring to the analytic work of Biquard \cite{Bi}, one has the following three model 
trivializations of a bundle that come into play when one considers a parabolic structure at the 
origin of a vector bundle over a punctured disk, with coordinate $z\,=\,r\exp({\sqrt{-1}\theta})$:
\begin{itemize} 
\item A unitary trivialization, which is not holomorphic; there is a unitary 
connection given by $d+ \sqrt{-1}\vec\alpha {d\theta}$ with respect to this trivialization,
in which the $\overline\partial$ operator is given by 
$(\frac {\partial}{\partial \overline z} - \frac{1}{2}\frac{\vec\alpha}{\overline z})d\overline z$.

\item A holomorphic, univalent trivialization in which the connection is given by $d + \frac{\vec\alpha}{z}dz$. 
This trivialization is related to the first one by a gauge transformation $r^{-\alpha}$ (so that the new basis behaves like $r^{\alpha}$).

\item A holomorphic, flat but multi-valued trivialization, related to the second one by a gauge transformation 
$z^{-\alpha}$.
\end{itemize}
 
We begin with a single curve:

\begin{definition}
A {\it parabolic structure} on a holomorphic vector bundle $E$ over a curve $Y$ at a point $p$, corresponding to $z\,=\,0$, with 
weights $\vec\alpha$ is the expression of the bundle over a disk containing $p$ as the sheaf of holomorphic sections on 
the punctured disk of a $\overline\partial$ operator on a unitary bundle $F$, conjugate to $(\frac {\partial}{\partial 
\overline z} - \frac{\vec\alpha}{2\overline z} )d\overline z$.

Such a sheaf has as invariants a standard flag $0\,\subset\, E_{m_1}\,\subset\, E_{m_2}\,\subset\,\cdots$ at the 
origin, corresponding to the decay rates of sections given by the different $\alpha_i$. Indeed the automorphisms of 
such a bundle preserving this operator are holomorphic over $E$, and to be continuous on $F$ must be block upper 
triangular at the origin. (See \cite{MS}, \cite{Bi}.)

Alternately, the parabolic structure can be given as the datum of a holomorphic connection, conjugate on the disc to 
$(\frac{\partial}{\partial z} + \frac{\vec\alpha}{z})dz$. The flat sections of $Aut(E)$ for this connection which 
remain finite at the origin determine in the gauge bundle $Aut(E)$ a parabolic subgroup $P$ of $Aut(E)$ at the origin. One considers the 
connections modulo the equivalence by gauge transformations on the disc taking values in $P$ at the origin; again the 
invariants are flags.
\end{definition}

In short, parabolic structures are given by growth rates of univalent holomorphic sections at the singular point, in a 
unitary trivialization. (See \cite{MS}.)

Note that the vector bundle $E$ has a natural Hecke transform to a bundle $\widehat E$, a subsheaf of $E$, with a 
corresponding shift of the weights to a $\widehat \alpha$ given by shifting the weights equal to $\alpha_n$ up
by $1$, to $\alpha_n+1$. This is particularly useful in our case when $\alpha_n\,=\, \alpha_1-1$.

Now observe that the flat sections $z^{-\alpha}$ of the parabolic structure in its holomorphic version can be thought 
of as being given locally as the orbits of a $\C^*$ action, acting with weight one on the base, and with weight 
$-\vec\alpha$ on the fiber, with each $\alpha_i$ corresponding to an action on the tautological lines of the flag 
manifold on the fiber; we see that the data for $\alpha$ varying (i.e., framed parabolic structures) is encoded as 
follows:

\begin{definition}
As in \cite{HuJS2}, a {\it framed parabolic structure} on a bundle $E$ at a point $p$, corresponding to $z=0$, with 
weights $\vec\alpha$ is the expression of the bundle over a disk containing $p$ as the sheaf of holomorphic sections 
on the punctured disk of a $\overline\partial$ operator on a unitary bundle $F$ conjugate to 
$(\frac{\partial}{\partial \overline z} + \frac{\vec\alpha}{\overline z} )d\overline z$, with in addition a 
trivialization of the top exterior powers of the subquotients $\bigwedge^{\max}(E_{I_j}/E_{I_{j-1}})$, such that the 
norm of the trivialization is (a fixed multiple of) $\alpha_{I_j}- \alpha_{I_{j-1}}$. When one is dealing in 
addition with an ${\rm SL}(n,{\mathbb C})$-structure, one asks that the product of the framings give the volume form at $p$. 
\end{definition}

When the weights are at the boundary $\alpha_1 \,=\, \alpha_n+1$, we again have to modify 
our construction a bit. We can still have a structure expressing $E$ as the holomorphic 
sections of a $\overline\partial$ operator $(\frac {\partial}{\partial \overline z} + 
\frac{ -\vec\alpha}{2\overline z} )d\overline z$, but now we take the Hecke transformed 
$\widehat E$, and also take the Hecke transformed flag. The framing is now a trivialization 
of the top exterior powers of the subquotients for $\widehat E$.

\begin{remark} Fix the standard basis $e_1,\, e_2,\, \cdots,\, e_n$ of $\C^n$, and consider the forms $e_1^*,\, e_1^*\wedge 
e_2^*,\, \cdots, e_1^*\wedge\cdots\wedge e_n^*$ defining the standard flag. For each multiplicity stratum $I \,=\, 
(I_1,\,\cdots,\,I_\ell)$, the possible corresponding ``quotients''
$$\gamma_1\, =\, c_1 e_1^*\wedge\cdots\wedge e_{I_1}^*,\, \gamma_2 \,=\, 
c_2e_{I_1+1}^*\wedge\cdots \wedge e_{I_2}^*,\,\cdots$$ trivializing top exterior powers of the subquotients, give a subvariety 
isomorphic to $(\C^*)^{\ell-1}$; taking the union over all multiplicity strata, gives a description of the projective 
space $\bP^{n-1}$ as a toric variety, with moment polytope the polytope of weights $\vec\alpha$. It is this 
relationship between a holomorphic variety and the weights $\vec\alpha$ that serves as model for the relationship 
between the moduli space, and the exponents $\alpha_i$ of the parabolic structure. \end{remark}

We go back now to our family $\X$ of curves, intersected with $B$, and define a doubly parabolic structure for a 
bundle $\widetilde \E$ over $\widetilde \X$ over the origin $p$; we suppose that the first Chern class of $\E$ is zero. We note again that our purpose here is to build in to the family a singular $\bar\partial$ operator, with its singularity at $X_0$, so that the parabolic structure at $X_0$ appears naturally as a limit of the smooth structures on $X_t$.

\begin{definition}
A {\it uniform doubly parabolic structure} on $\widetilde \E$ on $\widetilde B$, with weights $\vec\alpha^1,\, 
\vec\alpha^2$ is the expression of the bundle over $\widetilde B\,\subset\,\widetilde \X $ as the sheaf of holomorphic 
sections on $\widetilde B $ of a $\overline\partial$ operator on a unitary bundle $F$, conjugate (in the blown up 
coordinates of \eqref{blowup1}, \eqref{blowup2}) to ${\overline \partial} + \frac{\vec\alpha^1}{2\overline {\widetilde 
y}} d\overline {\widetilde y}$ near $\widetilde x\,=\,0$ (near $x_1$) and $ {\overline \partial} + \frac{ \vec\alpha^2 
}{2\overline {\widehat x} }\,d\overline {\widehat x}$ near $\widehat y \,=\, 0$ (near $x_2$), so that the poles of the 
$\overline\partial$-operator are over $\widetilde X_0\cap \widetilde B$.

Alternatively, the uniform doubly parabolic structure can be given on $\widetilde \E$ by a holomorphic flat connection 
with a pole along $\widetilde X_0$, conjugate to $ { \partial} - \frac{\vec\alpha^1}{ 2{\widetilde y}} d {\widetilde y}$ 
near $\widetilde x\,=\,0$ and ${\partial} - \frac{\vec\alpha^2}{ {\widehat x} }d{\widehat x}$ near $\widehat y\,=\, 0$.
\end{definition}

We note that while this expression for the connection has a pole along $\widetilde X_0$, if 
one takes a limit in $t$ along the curves $X_t$, the limit on $\widetilde X_0$ is a 
connection with a pole only at $x_1,\, x_2$, with residues $\alpha^1, \,\alpha^2$ respectively.

We are given the weights of the structure $$\alpha^1_1\,\geq\, \alpha^1_2\,\geq\,\cdots\,
\geq\, \alpha^1_n
\,\geq\, \alpha^1_1 -1,\,\,\, 
\sum_{i=1}^n\alpha^1_i \,=\, 0$$ at $x_1$ and $$\alpha^2_1\,\geq\, \alpha^2_2\,\geq\,\cdots\,
\alpha_n^2\,\geq\,\alpha_1+1,\, 
\,\,\sum_{i=1}^n\alpha_i^2 \,=\, 0$$
at $x_2$, lying within the set $\Delta$. Now group the ones that are equal together, as above, 
so that $I^1_1,\, \cdots,\, I^1_\ell$ are the indices for $x_1$ for which $\alpha^1_{I_j}\,>\,\alpha^1_{I_j+1}$, and 
similarly for $x_2$.

This gives as invariant flags of subbundles $E^1_{I^1_j}$ along the $x_1$ component of $ \widetilde X_0\cap 
\widetilde B$, and $E^2_{I^2_j}$ along the $x_2$ component. This is more than what we want: we just want the structure 
over the two points $x_1,\,x_2$, the intersections of the two branches of $ \widetilde X_0$ with $D$. We therefore 
define:

\begin{definition} A {\it doubly parabolic structure} on $\widetilde \E$ at $D\,\subset\, \widetilde B$, with weights
$\vec\alpha^1,\, \vec\alpha^2$ is the expression of the bundle over ${\widetilde B}\,
\subset\, \widetilde{\X}$ as an equivalence class 
of uniform doubly parabolic structures under the action of holomorphic maps of a neighborhood of $\widetilde 
X_0\cap\widetilde B$ into ${\rm GL}(n, \C)$ which are the identity at $x_1,\, x_2$.
\end{definition}

This structure has as invariants a pair of flags $E^1_{I^1_j}$ at $x_1$ and $E^2_{I^2_j}$ at $x_2$ corresponding to 
the flags of parabolic structure as one approaches the origin along the $x$ and $y$ axis, respectively. We note also 
that restricting the connection to the divisor $D$, instead of $\widetilde x\,=\,0$, reverses the sign of the 
residues of the connection, with weights $-\vec\alpha^1/2$. The same holds at $x_2$. We nevertheless use weights 
$\vec\alpha^1,\, \vec\alpha^2$ on all of the curve $X_0'$ when discussing parabolic structures and the ensuing 
stability.
 
Now we put in a framing at the two points; note now for the GIT quotient problem that instead of specific weights, 
we are simply on a stratum defined by the choice of two faces of the simplex $\Delta$, i.e., by the multiplicity 
patterns $I^1_j,\, I^2_j$ of the weights.

\begin{definition} A {\it framed doubly parabolic structure} on a bundle $\widetilde \E$ over $\widetilde X$ at the 
point $p$, is a doubly parabolic structure, with in addition trivializations of the top exterior powers of the 
subquotients $E^1_{I^1_j}/E^1_{I^1_j-1}$ and $E^2_{I^2_j}/E^2_{I^2_j-1}$, again such that the norm of the trivializations are (a fixed 
multiple of) $\alpha^i_{I_j}- \alpha^i_{I_{j-1}}$. For an ${\rm SL}(n, \C)$ structure, we ask in 
addition that the product of the trivializations be compatible with the ${\rm SL}(n, \C)$ structure of the 
bundle.\end{definition}

The flag, and the trivializations, are encoded at $x_1$ in decomposable elements $\widetilde{\beta}^1_{I_j}$ of 
$\bigwedge^{n-I_j}(E)^*$ whose annihilator is the $I_j$--dimensional vector space $E^1_{I_j}$ of the flag. We have the top 
form $\widetilde{\beta}^+_{n}$, which is just the ${\rm SL}(n,\C)$ structure, and has as annihilator the zero vector space $E^1_0$.
The fact that we are dealing with a flag tells us that there are decomposable elements $\gamma^1_j$ of
$\bigwedge^{I^1_j- I^1_{j-1}}(E^*)$ with
$$\widetilde{\beta}^1_{I_{j-1}} \,\,=\,\, \gamma^1_j\wedge \widetilde{\beta}^1_{I_{j}}\,. $$
We write this as $\gamma^1_j \,= \,\widetilde{\beta}^1_{I_{j-1}}/\widetilde{\beta}^1_{I_{j}} $; now $ \gamma^1_j$
defines a volume form on $E^1_{I_j}/E^1_{I_{j-1}}$. We have similar quantities
$\widetilde{\beta}^2_{R(I)_j}$ at $x_2$ encoding the other parabolic structure. As noted above,
the data $\vec\alpha^1,\, \vec\alpha^2$ get encoded in a moment map applied to the elements $\gamma^i_j $.

Next, we impose some symmetry. We restrict to strata with $\alpha^2\,=\,-R(\alpha_1)$, i.e., with $\alpha^2_j \,=\, 
-\alpha^1_{n-j+1}$; the two points then have the same multiplicity pattern, but in inverse order. Corresponding to 
these we have flags $E^1_{I^1_j}$,\, $E^2_{n-I^1_j}$ at the two points, and in the framed case, decomposable elements 
$\widetilde{\beta}^1_{I_j}$,\, $\widetilde{\beta}^2_{n-I_j}$.

We also have a natural action of the torus of ${\rm SL}(n,{\mathbb C})$ on the framings at each of the points 
$x_1,\, x_2$, defined uniformly over the strata. Consider the ``antidiagonal'' torus $T$ in the product, consisting 
of elements $(diag (\mu_1,\,\cdots,\,\mu_n),\,\,diag (\mu_n,\,\cdots ,\,\mu_1))$; quotienting by this torus gives an 
identification of the framings at $x_1$,\, $x_2$.

\begin{definition}
A {\it symmetric doubly parabolic structure} on a bundle $\widetilde \E$ over $\widetilde 
X$ at the point $p$ is a doubly parabolic structure with symmetric weights satisfying 
$\alpha^2\,=\, -R(\alpha_1)$. A {\it framed symmetric doubly parabolic structure} comes in 
addition with a trivialization of the top exterior power of each of the subquotients 
$E^1_{I^1_j}/E^1_{I^1_j-1}$ and $E^2_{I^2_j}/E^2_{I^2_j-1}$. For an ${\rm SL}(n, \C)$ 
structure, we ask in addition that the product of the trivializations be compatible with 
the ${\rm SL}(n, \C)$ structure of the vector bundle.
\end{definition}

Note that we have in addition two copies of the torus $T$ acting naturally on the 
trivializations of the top exterior powers of $E^1_{I^1_j}/E^1_{I^1_j-1}$ and 
$E^2_{I^2_j}/E^2_{I^2_j-1}$; this action extends well to the cases of the bundles acquiring 
torsion. Quotienting by the antidiagonal torus, as above, amounts to identifying these 
quotients:

\begin{definition} A {\it reduced framed doubly parabolic structure} on a bundle $\widetilde \E$ over $\widetilde X$ 
at the point $p$ is a doubly parabolic structure, with weights lying in $\Delta^R$, and in addition identification
of the top exterior power of each of the subquotients $E^1_{I_j}/E^1_{I_{j-1}}$ and $E^2_{n - I_{j+1}}/E^2_{n-I_{j }}$. For 
an ${\rm SL}(n, \C)$ structure, we again ask in addition that the identifications be compatible with the global 
trivialization of the top exterior power of the vector bundle.
\end{definition}

We note that the action of $T$, as it changes the norms of $\widetilde{\beta}^1_{I_j},\,\, \widetilde{\beta}^2_{R(I)_j}$, modifies the 
$\vec\alpha^1,\, \vec\alpha^2$, and does not (as it should not) preserve the equality $\vec\alpha^2 \,=\, -R(\vec\alpha^1)$ of the 
symplectic reduction. It does however respect the stratification, and there will be elements in the orbit for which the equality does 
hold; this is necessary for the equivalence of the symplectic and holomorphic quotients.

It will be the moduli of these objects that we want to consider at $t\,=\,0$.

\subsection{Stability}

We will restrict to the symmetric case. 

\begin{definition}
We will say that a symmetric doubly parabolic bundle $\widetilde \E$ is $\vec\alpha$ stable (respectively,
semistable) if
\begin{itemize}
\item its restrictions $\widetilde E_t$ are stable (respectively, semistable) in the usual sense as bundles over each
$X_t$,\, $t\,\neq\, 0$,

\item and the restriction $\widetilde{E}_0 $ to $X'_0$ is 
$\vec\alpha$ stable (respectively, semistable) as a doubly parabolic bundle over $\widetilde X'_0$.
\end{itemize}
\end{definition}

Note that this pointwise definition of stability is compatible with the stability for 
families considered by Seshadri in \cite{Se} (pp. 252 ff.). The above definition is 
natural. Indeed, the question of whether an object is semistable is related to whether its 
orbit under a linearization is bounded away from the origin; the orbit will be bounded away 
from the origin in a (compact) family if and only if it is bounded away from the origin at 
every point of the family.

\begin{definition}
We will say that a framed symmetric doubly parabolic bundle $\widetilde \E$ with multiplicity pattern $I,\,k$ is
stable (respectively, semistable) if
\begin{itemize}
\item its restrictions $\widetilde E_t$ are stable (respectively, semistable) in the
usual sense as bundles over each $X_t,\, t\,\neq\, 0$, and

\item the restriction
$\widetilde{E}_0 $ of $\widetilde \E $ is $\vec\alpha$ stable (respectively, semistable) as a reduced doubly parabolic
bundle over $\widetilde{X}'_0$, for one of the weights $\vec\alpha$ in the stratum $\Delta^{(I, k)}\,\in\, \Delta^R$.
\end{itemize}
\end{definition}

The curve $\widetilde X'_0$ has one component that is a (double) projective line $2D$. On 
the line $D$, the sheaf $\widetilde{E}_0$ (modulo torsion) will split as a direct sum of 
line bundles of the form ${\SO}_D(j)$.

\begin{lemma}\label{lod}
Let $\vec\alpha^2 \,=\, -R(\vec\alpha^1)$. If $\alpha_1\,<\, \alpha_n+1$, then 
$\widetilde{E}_0$ is trivial over $D$; if we are at a closed point of the prequotiented GIT 
space when $\alpha_1\,=\, \alpha_n+1$, then $\widetilde{E}_0$ (modulo torsion) is a bundle $ 
{\SO}_D(-1)^k\oplus {\SO}_D^{n-k}$ on $D$. Thus the rank $k$ of the torsion is encoded by the type 
of the bundle.
\end{lemma}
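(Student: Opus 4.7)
The plan is to read off the splitting type of $\widetilde E_0|_D$ on $D\,\cong\,\bP^1$ from the explicit local description of the symmetric doubly parabolic structure. From Section \ref{se2.2}, in the two blow-up charts \eqref{blowup1}, \eqref{blowup2} the holomorphic flat connection on $\widetilde{\mathcal E}$ takes the diagonal forms $\nabla\,=\,d-\frac{\vec\alpha^1}{2\widetilde y}d\widetilde y$ and $\nabla\,=\,d-\frac{\vec\alpha^2}{2\widehat x}d\widehat x$ with $\vec\alpha^2\,=\,-R(\vec\alpha^1)$. Restricting these to $D$ produces holomorphic frames $\tilde e_1,\ldots,\tilde e_n$ over $D\setminus\{x_2\}$ and $\hat f_1,\ldots,\hat f_n$ over $D\setminus\{x_1\}$; by Grothendieck's theorem, $(\widetilde E_0/\mathrm{torsion})|_D\,\cong\,\bigoplus_i\SO_D(k_i)$, so it suffices to identify the transition function $g$ between these two frames on the overlap $D\setminus\{x_1,x_2\}\,\cong\,\C^*$.

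For Case 1 ($\alpha_1\,<\,\alpha_n+1$) I would compute $g$ by matching flat sections across the two charts. The flat sections are $\widetilde y^{\vec\alpha^1/2}\tilde e_i$ and $\widehat x^{\vec\alpha^2/2}\hat f_j$; using $\widehat x\,=\,1/\widetilde y$ and the symmetry $\vec\alpha^2\,=\,-R(\vec\alpha^1)$, the constraint that $g$ be single-valued on the annulus forces the constant intertwiner between the two local monodromies to be (up to scaling) the reversal permutation $R$. The identity $\widetilde y^{R(\vec\alpha^1)/2}R\,=\,R\,\widetilde y^{\vec\alpha^1/2}$ then collapses
\[
g(\widetilde y)\,=\,\widetilde y^{-\vec\alpha^2/2}\,R\,\widetilde y^{-\vec\alpha^1/2}\,=\,R
\]
to a constant matrix, and hence $\widetilde E_0|_D\,\cong\,\SO_D^{\oplus n}$ is trivial.

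For Case 2 ($\alpha_1\,=\,\alpha_n+1$, at a closed orbit of the prequotient GIT problem), the formal computation above still yields $g\,=\,R$, but Theorem \ref{thm3.1} tells us the closed-orbit representative is not the naive bundle: the sheaf $\widetilde E_0$ acquires torsion of length $t^1$ at $x_1$ supported on the $\alpha^1_n$-eigenspace, and of length $t^2$ at $x_2$ supported on the $\alpha^2_n$-eigenspace, while the ${\rm SL}(n,\C)$-structure degenerates to these orders. I would realize $\widetilde E_0/\mathrm{torsion}$ as the Hecke modification of the Case 1 bundle in these distinguished subspaces. Locally near $x_1$ the Hecke modification in $t^1$ directions tensors those directions by the ideal sheaf $\SO_{\widetilde B}(-x_1)$, whose restriction to $D$ is $\SO_D(-x_1)\,\cong\,\SO_D(-1)$; similarly at $x_2$ for $t^2$ further directions (under the $R$-matching, the $x_1$-Hecke directions pair with the $\alpha^2_1$-eigenspace at $x_2$, which is disjoint from the $\alpha^2_n$-eigenspace where the $x_2$-Hecke acts). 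Combined with the constant transition $R$, this yields
\[
(\widetilde E_0/\mathrm{torsion})|_D\,\cong\,\SO_D(-1)^{\oplus k}\oplus\SO_D^{\oplus(n-k)}
\]
with $k\,=\,t^1+t^2$ equal to the total rank of the torsion, as asserted.

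The main obstacle lies in Case 2: one must verify that the closed prequotient-GIT orbit really does force the Hecke twist to occur in precisely the $\alpha^i_n$-eigenspaces and that no further twists are required. This reduces to invoking the description of closed semistable representatives from \cite{HuJS2} together with the vanishing pattern of the $\beta^i_j$ at $k^i\,=\,1$ recorded in Theorem \ref{thm3.1}, which pin down both the location and the rank of the torsion; the remainder of the argument is then routine Hecke-twist bookkeeping on top of the trivial bundle of Case 1.
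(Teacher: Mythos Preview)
Your argument in Case~1 never invokes semistability, and that is precisely where it breaks. The doubly parabolic structure on $\widetilde{\mathcal E}$ only asserts that the $\overline\partial$ operator (equivalently, the holomorphic connection) is \emph{conjugate} to the model operator of Section~\ref{se2.2} in a neighbourhood of $x_1$ and, separately, in a neighbourhood of $x_2$. It does not give you a single global flat connection on $D$ with poles only at $x_1,x_2$. Your ``matching of flat sections'' step implicitly assumes such a global connection exists: you analytically continue $\widetilde y^{\vec\alpha^1/2}\tilde e_i$ and $\widehat x^{\vec\alpha^2/2}\hat f_j$ across the whole annulus $D\setminus\{x_1,x_2\}$ and compare them there. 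But the frames $\tilde e_i$ and $\hat f_j$ are only defined near the respective points, and the holomorphic transition $g$ relating them on the overlap is an \emph{arbitrary} element of ${\rm GL}(n,\SO(D\setminus\{x_1,x_2\}))$, not forced by any monodromy condition. Put differently: a holomorphic bundle on $\bP^1$ with prescribed parabolic structures at two marked points can have any splitting type $\bigoplus_i\SO_D(k_i)$ whatsoever; the flags and weights at $x_1,x_2$ do not see the $k_i$.

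The paper's proof therefore takes a different route. It uses the parabolic semistability of $\widetilde E_0|_D$: for any line subbundle $\SO_D(j)$ picking up weights $\alpha_s$ at $x_1$ and $-\alpha_t$ at $x_2$, the slope inequality gives $j\le \alpha_s-\alpha_t<1$, so $j\le 0$; the dual bound on quotients gives $j\ge 0$, forcing triviality in Case~1. In Case~2 the torsion quotient shifts the relevant weights by $1$, which pushes the subbundle bound to $j\le -1$ in the affected directions while the quotient bound stays at $j\ge -1$, yielding exactly $\SO_D(-1)^k\oplus\SO_D^{n-k}$. Your Hecke-modification picture in Case~2 is morally compatible with this, but since it is built on the unproven Case~1 transition computation and again does not use semistability to rule out other splitting types, it inherits the same gap.
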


\begin{proof}
One checks that it is necessary to have the semistability simply on the restriction of the 
bundle to $D\,=\,\bP^1$. If ${\SO}_D(j)$ is a subbundle on $\bP^1$, the stability forces the 
corresponding weights $\alpha_s ,\, -\alpha_t$ at $x_1,\, x_2$ of the parabolic structure 
to satisfy $j\,\leq\, (\alpha_s -\alpha_t)$. If $\alpha_1\,<\, \alpha_n+1$, then $j\,\leq\, 
0$. For the case $\alpha_1\,=\, \alpha_n+1$, quotienting out the torsion ``shifts" the 
weight $\alpha_n$ at $x^1$ up by one, and the weight $-\alpha_1$ at $x_2$ up by one. The 
upper bound for $j$ is then still a difference $(\alpha_s -\alpha_t)$ for the shifted 
weights, and so $j\,<\,0$; on the other hand if $\SO(j)$ is a quotient bundle, then we get 
a bound $j\,\geq\, -1$.
\end{proof}
 
\subsection{A fibered GIT problem}

We can now treat the fibered GIT problem of classifying our bundles $\widetilde \E$. As defined above, we consider 
over $\widetilde \X$ reduced doubly parabolic ${\rm SL}(n,\C)$ vector bundles $\widetilde \E$ satisfying a fibrewise 
semistability condition, with ordinary bundle semistability over $X_t,\,\, t\,\neq\, 0$, and (symmetric) framed doubly 
parabolic stability over the restriction of the bundle to $\widetilde{X}_0$.

The aim will be to build a moduli space as a fibrewise GIT quotient of vector spaces derived from the space of sections 
of $\widetilde \E$ (or rather some twist of it). We first see that this can be done uniformly in $t$.

To describe these sections, let $\mathbf{C}$ be a divisor over $U$, intersecting each curve $X_t$, including $X_0$, with 
multiplicity $N_{\mathbf{C}}.$
Now we want to compute the dimensions of $$H^0(X_t,\, \widetilde{\E}(\mathbf{C}-D))$$ for $t\,\neq\, 0$, and $H^0(X_0', 
\,\widetilde{\E}(\mathbf{C}-D))$, assuming that we are in a stable range where the relevant $H^1$'s vanish.

\begin{lemma}\label{lod2}
Let $c_1(\widetilde{\E})\, =\, 0$. Assume that $ H^1(X_t,\, \widetilde 
\E(\mathbf{C}-D\,))\, =\, 0$ for all $t$. Then for all $t$ (noting that
$\widetilde\E(\mathbf{C}-D) =\widetilde\E(\mathbf{C})$ over $t\,\neq\, 
0$), $$h^0(X_t,\,\, \widetilde{\E}(\mathbf{C}))\,=\, 
h^0(X_0' ,\,\, \widetilde{\E}(\mathbf{C}-D))\,= \,n(1-g) + n (N_{\mathbf{C}})\, .$$ 
\end{lemma}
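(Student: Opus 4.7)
The plan is to deduce both values by Riemann--Roch on a smooth fiber together with the constancy of Euler characteristic in the flat family $p: \widetilde{\X} \longrightarrow U$.

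First I handle $t \neq 0$. Since the exceptional divisor $D$ is supported over the origin, it does not meet $X_t$, so $\widetilde{\E}(\mathbf{C}-D)|_{X_t} = \widetilde{\E}(\mathbf{C})|_{X_t}$, a rank-$n$ bundle on the smooth genus-$g$ curve $X_t$ whose degree is
\[
\deg\bigl(\widetilde{\E}(\mathbf{C})|_{X_t}\bigr) \,=\, c_1(\widetilde{\E})\cdot X_t + n\, \mathbf{C}\cdot X_t \,=\, 0 + n N_{\mathbf{C}}\, .
\]
Riemann--Roch gives $\chi(X_t, \widetilde{\E}(\mathbf{C})) = n(1-g) + n N_{\mathbf{C}}$, and since $\dim X_t = 1$ the assumed vanishing $H^1=0$ converts $\chi$ into $h^0$.

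Next I handle $t=0$. The morphism $p$ is flat, its fibers being effective Cartier divisors in the smooth total space $\widetilde{\X}$, and $\widetilde{\E}(\mathbf{C}-D)$ is locally free on $\widetilde{\X}$, hence flat over $U$. Therefore the Euler characteristic of the restriction to the scheme-theoretic fibers is locally constant in $t$, so
\[
\chi\bigl(X'_0,\, \widetilde{\E}(\mathbf{C}-D)\bigr) \,=\, n(1-g) + n N_{\mathbf{C}}\, .
\]
The hypothesized vanishing of $H^1(X'_0,\widetilde{\E}(\mathbf{C}-D))$ then gives the same $h^0$.

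The only delicate point, rather than a genuine obstacle, is to remember that the relevant central fiber is the non-reduced scheme $X'_0 = \widetilde X_0 + 2D$, not its reduction; the flatness argument automatically uses this scheme-theoretic fiber, and this is exactly what forces the twist by $-D$ to produce the correct contribution from the double line $2D$. As a consistency check one can compute directly on $X'_0$ using that, by Lemma \ref{lod}, $\widetilde{\E}(-D)|_D \cong \SO_D(1)^{\oplus n}$, combined with the fact noted in the text preceding the lemma that $\widetilde{\E}(-D)|_{\widetilde X_0}$ has degree $-2n$; a routine decomposition of $X'_0$ into components then reproduces the same total $n(1-g) + n N_{\mathbf{C}}$.
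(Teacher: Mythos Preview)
Your argument is correct and essentially parallel to the paper's, with one structural difference worth noting. For $t\neq 0$ both you and the paper invoke Riemann--Roch in the same way. For $t=0$, the paper first computes $h^0$ directly on $X_0'=\widetilde X_0+2D$ in the special case where $\widetilde\E$ is the pullback of a bundle from $\X$ (so trivial on $D$): it counts $5n$ sections on the thickened line $2D$, $n(2-g)-2n+nN_{\mathbf C}$ on $\widetilde X_0$, and $4n$ matching constraints, obtaining the stated number; it then invokes invariance of the Euler characteristic under deformation of $\widetilde\E$ (with $X_0'$ fixed) to pass to an arbitrary $\widetilde\E$. You instead invoke invariance of the Euler characteristic under deformation of the \emph{base parameter} $t$ in the flat family $\widetilde\X\to U$, which directly transports the Riemann--Roch value from a smooth fiber to the scheme-theoretic central fiber $X_0'$. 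Both routes rest on the same principle; yours is shorter and bypasses the component-by-component bookkeeping, while the paper's explicit count is what you sketch only as a consistency check. One small caution: your consistency check cites Lemma~\ref{lod} for $\widetilde\E(-D)\big\vert_D\cong\SO_D(1)^{\oplus n}$, but that lemma also allows $\SO_D(-1)^k\oplus\SO_D^{\,n-k}$ on the boundary stratum; the paper sidesteps this by doing the explicit count only for a lifted bundle, which is automatically trivial on $D$.
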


\begin{proof}
For $t\,\neq\, 0$, Riemann--Roch gives that $$\dim H^0(X_t,\, \widetilde{\E}(\mathbf{C}))\,=\,
n(1-g) + n (N_{\mathbf{C}}).$$ For $t
\,=\,0$, assume first that 
$\widetilde \E$ is the lift of a bundle $F$ from $\X$. Then $H^0(2D,\, \widetilde{\E}(-D))$ is the 
space of sections of $F$ on the second formal neighborhood of the origin, but vanishing at the origin. This gives a 
space which is $5n$-dimensional. We then consider sections on $X_0$: the restriction of $\widetilde{\E}$ has
degree $0$, and so that of $\widetilde{\E}(\mathbf{C}-D)$ is $n N_{\mathbf{C}} -2n$; the space of 
sections of $\widetilde{\E}(\mathbf{C}-D)$ is then $(n(1-(g-1)) -2n +nN_{\mathbf{C}})$-dimensional. Sections over
$X_0\cup 2D$ require equality over the intersection. 
This gives $4n$ constraints when $N_{\mathbf{C}}$ is large, and so again we get a space of dimension
$$n (1-(g-1))+ nN_{\mathbf{C}} + 5n-2n-4n\, =\, n(1-g) + n(N_{\mathbf{C}})\, .$$
Now, even if $\widetilde \E$ is not a lift, as we are dealing with an Euler characteristic, and $H^1$ vanishes, the result holds for any deformation, and so in general.
\end{proof} 

We now define families $\mathcal F$ of sheaves $\widetilde\E$ of degree $m$ on $\widetilde\X$, trivial on $D$, by 
asking that for all subsheaves $\widetilde \E'$ of positive rank $k'$ and degree $m'$ with the same torsion as $\widetilde\E$, that
$$\frac{m'}{k'} \,<\, 2m.$$
The families $\mathcal F$ contain fairly easily the sheaves satisfying our eventual notion of stability. For the moment:

\begin{lemma} 
These families are bounded; that is there is a twist by a positive divisor $\mathbf{C}$ as above such that the sheaves 
$\widetilde\E(\mathbf{C}-D)$ are generated by local sections and that the first cohomology $H^1(X_t,\,\E(\mathbf{C}-D))\, =\, 0$, for $t\in D$.
\end{lemma}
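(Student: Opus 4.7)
The statement is a standard boundedness result adapted to the family degeneration, so the strategy is to bound the Castelnuovo--Mumford regularity of the sheaves $\widetilde{\E}$ uniformly and fibrewise, and then to invoke Grothendieck/Mumford-style boundedness relative to the projective morphism $\widetilde{\X} \to U$. Fix a relatively ample divisor on $\widetilde{\X}$ (one can take $\mathbf{C}$ itself for $N_{\mathbf{C}}$ large enough), and reduce the claim to a uniform bound on Harder--Narasimhan slopes of subsheaves along each fibre.

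First I would handle the smooth fibres. For $t \neq 0$, $\widetilde{\E}|_{X_t}$ is a vector bundle of rank $n$ on the genus $g$ curve $X_t$, and any locally free subsheaf of rank $k'$ and degree $m'$ satisfies $m'/k' < 2m$ by hypothesis. This yields an upper bound on the slopes of the quotients in the Harder--Narasimhan filtration, and the classical Mumford boundedness theorem for vector bundles on curves with bounded HN slopes (as in Le Potier or Narasimhan--Ramanan) gives boundedness of the family of restrictions $\{\widetilde{\E}|_{X_t}\}_{t \neq 0}$.

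Next I would analyse the central fibre $X_0' = \widetilde{X}_0 \cup 2D$. Since $\widetilde{\E}|_D$ is trivial by hypothesis, the exceptional component is under control. On $\widetilde{X}_0$, the sheaf $\widetilde{\E}|_{\widetilde{X}_0}$ modulo torsion at $x_1, x_2$ has degree bounded in absolute value (trivially for $\alpha_1 < \alpha_n+1$, and via Lemma~\ref{lod} at the boundary $\alpha_1 = \alpha_n + 1$), and the torsion ranks at $x_1, x_2$ are bounded by $n$. A subsheaf of $\widetilde{\E}|_{X_0'}$ sharing the same torsion restricts to subsheaves on each component, so the hypothesis $m'/k' < 2m$ transfers to an HN slope bound on $\widetilde{X}_0$ (the bound on $D$ is automatic since $\widetilde{\E}|_D$ is trivial). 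Mumford boundedness on the smooth genus $g-1$ curve $\widetilde{X}_0$ then closes the central fibre. Having bounded the HN slopes uniformly in $t$, I would apply the Castelnuovo--Mumford regularity criterion: there is $N_0$ such that for $N_{\mathbf{C}} \geq N_0$ and every $\widetilde{\E}$ in the family, $\widetilde{\E}(\mathbf{C}-D)$ is $0$-regular on every fibre, which gives both the vanishing $H^1(X_t, \widetilde{\E}(\mathbf{C}-D)) = 0$ and generation by local sections. Semicontinuity then makes a single choice of $\mathbf{C}$ work uniformly over $U$.

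The main technical obstacle is the reducibility of the central fibre $X_0' = \widetilde{X}_0 + 2D$ together with the condition that destabilizing subsheaves share the same torsion as $\widetilde{\E}$. One must verify carefully that a fibrewise destabilizing subsheaf at $t = 0$ --- which could a priori be supported on only one component, or meet the nilpotent direction normal to $D$ differently from $\widetilde{\E}$ --- cannot evade the slope hypothesis in a way that destroys uniformity. The triviality of $\widetilde{\E}|_D$ is the key leverage: it forces any destabilizer to restrict trivially (up to twist) on $D$, so the slope is effectively computed on $\widetilde{X}_0$, where the hypothesis applies directly.
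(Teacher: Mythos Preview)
Your proposal is correct and follows the same strategy as the paper, which dispatches the lemma in a single sentence: for the central fibre $X_0$ it cites Lemma~3.2 of \cite{HuJS2} together with Lemmata~\ref{lod} and~\ref{lod2} above, and for the smooth fibres $X_t$ it simply observes that the defining slope condition on $\mathcal{F}$ is weaker than stability. Your version spells out the Harder--Narasimhan and Castelnuovo--Mumford mechanics that lie behind those citations, which is exactly what the references would supply.
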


For $X_0$, this is Lemma 3.2 of \cite{HuJS2} and Lemmata \ref{lod2} and \ref{lod} above on the type of the bundle 
over $D$; for the $X_t$, the family $\mathcal F$ is defined by a much weaker notion than stability.
 
We recall the encoding of these sheaves by algebraic data. We do this following Bhosle 
\cite{Bh}, with a small twist. Recall that we are dealing with ${\rm SL}(n, \C)$ bundles, 
so with a fixed determinant. This gives a homomorphism $\bigwedge^n(\widetilde\E) 
\,\longrightarrow\, \SO$, and so a homomorphism $\bigwedge^n(\widetilde\E(\mathbf{C})) 
\,\longrightarrow\, \SO(n\mathbf{C})$, and also $\bigwedge^{n-1}(\widetilde\E(\mathbf{C}))\wedge 
\widetilde\E(\mathbf{C}-D) \, \longrightarrow\, \SO(n\mathbf{C} -D)$. Taking a direct image 
to $\X$ gives a homomorphism
$$\bigwedge\nolimits^{n-1}\left( \E(\mathbf{C})\right)\bigwedge
(\E(\mathbf{C})\otimes \SI_0)\,\longrightarrow\,\SO(n\mathbf{C})\otimes \SI_0.$$
Here $\SI_0$ is the ideal sheaf of the origin. This basically frees up the map at the origin so that it can take different values along the two branches of $X_0$.

Now project further and take a zero-th direct image to $U$, letting $V_0\,\subset\, V$ denote the direct images of $ 
\E(\mathbf{C}-D)\otimes \SI_0$ and $\E(\mathbf{C})$ over $U$ (note that the sections over the pre-images for
$V_0$ have constant rank); 
both sheaves are locally free. If $L$ is the direct image of $\SO(n\mathbf{C} -D)$, then we have a determinant map on 
sections $$\beta_0\,\,:\,\,\bigwedge\nolimits^{N-1}( V)\bigwedge V_0\,\longrightarrow\, L.$$
Here $N$ denotes the rank of the direct image of $\widetilde{\E}(\mathbf{C}-D)$. The map $\beta_0$ encodes the
sheaf on $X_t,\,\, \widetilde X_0$, modulo its torsion: indeed, lifting $\beta_0$ back to $\widetilde \X$ and evaluating sections in $L$ gives a sheaf version of the determinant map
$$B_0\,\,:\,\, \bigwedge\nolimits^{N-1}(V)\wedge V_0\,\longrightarrow \,\SO(n\mathbf{C} -D) .$$
If one sets $Ann(B_0)$ to be the subsheaf of sections $s_1$ of $V$ such that
$$B_0(s_1,\, s_2,\,\cdots,\,s_n)\, =\, 0$$ for all other sections, 
the quotient $V/Ann(B_0)$ is just $\widetilde{\E}(\mathbf{C})$, modulo its torsion. We then
recuperate, modulo torsion, the evaluation map of sections
$$ev\,\,:\,\, V\,\longrightarrow\, \widetilde{\E}(\mathbf{C})\, .$$

Note that the twist by $\SO(-D)\big\vert_D\,=\, \SO(1)$ over $D$, so that $\E(-D)$ restricted to $D$ is generically 
a sum of line bundles $\SO(1))$ just ``frees'' in the direct image the sections of $\widetilde \E$ over the two 
branches of the curve $\widetilde X_0$, allowing them to vary independently. The quotient construction gives sheaves 
over $\widetilde X_0$, not $\widetilde X_0' \,= \,\widetilde X_0 +2D$.
 
To encode the framed double parabolic structure, we proceed as in \cite{HuJS2}. The elements $\widetilde{\beta}^1_k
\,\in\,\bigwedge^{k}(\widehat{\E}\big\vert_{x_1})^*,\,\, \widetilde\beta^2_k \,\in\,
\bigwedge^{k}(\widehat \E\big\vert_{x_2})^*$,
become, using the 
evaluation, elements $$\beta^1_k \,\in \,\bigwedge\nolimits^{k}(\mathcal O^{\oplus N}\big\vert_{x_1}),\ \
\beta^2_k\,\in\, \bigwedge\nolimits^{k}(\mathcal 
O^{\oplus N}\big\vert_{x_2})\, .$$ As noted above, the elements at $x_1$ all 
have to be compatible; they should also be compatible with $B_0(x_1)$; the same holds at $x_2$. Our definition becomes, in the GIT context:
 
\begin{definition} Under the standard linearization (see \cite{HuJS2} for $t\,=\,0$), the elements $\beta_0,\,
\beta^1_k,\, \beta^2_k$ define a semistable framed doubly parabolic structure for ${\rm SL}(n,{\mathbb C})$ over $U$ if and only if
\begin{itemize}
\item they define a semistable holomorphic bundle over $t\,\in\, U$ for $t\,\neq\, 0$, and 

\item they define a semistable framed doubly parabolic structure on $\widetilde X_0$ over $0\,\in\, U$.
\end{itemize}
\end{definition}

\begin{theorem}
There is a family $\widetilde \M$ of moduli spaces over $U$ of symmetric framed doubly parabolic bundles, with fiber 
at $t\,\neq\, 0$ the moduli space $M_t$ of ${\rm SL}(n,\C)$-bundles over $X_t$ and, at $t\,=\,0$, the moduli space $M_0$ 
of framed symmetric doubly parabolic bundles over $\widetilde X_0$.
\end{theorem}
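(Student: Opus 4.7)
The plan is to realize $\widetilde{\M}$ as a fibered GIT quotient over $U$ and then verify that each fiber is the desired moduli space. First I would fix a twist by a sufficiently positive divisor $\mathbf{C}$ as provided by the boundedness lemma, so that for every $\widetilde{\E}$ in our family $\mathcal F$, the sheaf $\widetilde{\E}(\mathbf{C}-D)$ is generated by global sections with vanishing $H^1$. By Lemma \ref{lod2}, the rank $N\,=\,n(1-g)+n\,N_{\mathbf{C}}$ is independent of $t$, so pushing down to $U$ we obtain locally free sheaves $V_0\,\subset\, V$ of fixed ranks. One then forms a relative parameter scheme $\mathcal R\,\longrightarrow\, U$ whose fiber over $t$ parametrizes the algebraic data $(\beta_0,\, \beta^1_k,\, \beta^2_k)$ of the preceding subsection: a determinant map $\beta_0\,:\,\bigwedge^{N-1}V\wedge V_0\,\longrightarrow\, L$, together with compatible decomposable elements at $x_1,\,x_2$ satisfying the compatibility constraints with $B_0$ and with the ${\rm SL}(n,\C)$ structure.

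Next I would apply the relative GIT machinery for the ${\rm SL}(N)$ action on $\mathcal R$ by change of basis of sections, with the linearization obtained by assembling the determinant linearization on $X_t$ for $t\,\neq\,0$ with the framed-parabolic linearization of \cite{HuJS2} at $t\,=\,0$. Away from $t\,=\,0$ the construction of the quotient reduces to the standard Bhosle-type GIT construction (adapted as in \cite{Bh}) for semistable ${\rm SL}(n,\C)$-bundles on $X_t$, whose output is $M_t$. At $t\,=\,0$, the same construction is precisely the one carried out in \cite{HuJS2}, whose output is the space of semistable framed doubly parabolic sheaves on $\widetilde X_0$, which by Theorem \ref{thm3.1} (and the subsequent identification with the symplectic implosion) is $M_0$.

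The compatibility between fiberwise stability and fibered stability is the one point where one must be careful; here one invokes Seshadri's framework for families in \cite{Se}, as noted in the paper just after the pointwise stability definition: a point in $\mathcal R$ is GIT-semistable with respect to the assembled linearization if and only if it is semistable in each fiber, because boundedness of orbits away from the unstable locus is a fiberwise property on a proper family. This identifies the open locus of semistable points in $\mathcal R$ with the union of the fiberwise semistable loci, so the good quotient $\widetilde{\M}\,=\,\mathcal R^{ss}/\!\!/{\rm SL}(N)$ exists as a scheme over $U$ with the announced fibers.

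The main obstacle is ensuring that the relative GIT quotient is well-behaved across the special fiber: the linearization, the underlying parameter space, and the stability condition all change structure at $t\,=\,0$ because the central curve is singular and the relevant sheaves acquire torsion on the $k\,=\,1$ strata. The key tool making this go through is the twist by $-D$ after blowing up, combined with the determinant construction using $V_0\,\subset\, V$: this is exactly what uncouples the two branches of $\widetilde X_0$ in the direct image, so that the fiber GIT problem at $t\,=\,0$ coincides with the one of \cite{HuJS2} rather than a problem on the singular curve $X_0$ itself. Once this identification is in place, flatness of the parameter space (guaranteed by the constancy of $h^0$ from Lemma \ref{lod2}) and properness of the family of curves yield that $\widetilde{\M}\,\longrightarrow\, U$ is the desired family of moduli spaces.
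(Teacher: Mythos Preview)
Your plan is a coherent relative GIT construction and follows the setup that the paper itself lays out in the ``fibered GIT problem'' subsection, but the paper's actual proof takes a rather different route. Instead of carrying out the GIT quotient $\mathcal R^{ss}/\!\!/{\rm SL}(N)$ over $U$, the paper first builds $\widetilde{\M}$ as a topological space directly from the symplectic description (\ref{MM-sympl}) of representations into ${\rm SU}(n)$; this already gives the family over $U$ as a set with its fibration. Then the fiberwise holomorphic structures are imported via the Narasimhan--Seshadri theorem for $t\neq 0$ and via Proposition~3.2 of \cite{HuJS2} for $t=0$. Finally, the transverse holomorphic structure is pinned down by declaring that isomonodromic deformations (more generally, holomorphic families of framed doubly parabolic bundles on the surface $\widetilde{\X}$) are the holomorphic sections of $\widetilde{\M}\to U$.

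The trade-offs are clear. The paper's hybrid argument sidesteps the delicate point you flag as the ``main obstacle'': you never have to produce a single relative linearization that interpolates between the usual bundle linearization on $X_t$ and the framed-parabolic linearization of \cite{HuJS2} at $t=0$, nor check that the semistable locus in $\mathcal R$ is open and flat over $U$. It also dovetails with the paper's global theme of running the symplectic and holomorphic pictures in parallel, since the underlying topological family literally \emph{is} the symplectic one. Your approach, by contrast, is self-contained on the algebro-geometric side and would in principle yield $\widetilde{\M}$ as a scheme over $U$ without any appeal to unitary representations or isomonodromy; the cost is that the step ``assemble the two linearizations into one over $U$'' and the verification that Seshadri's fiberwise criterion applies uniformly across the strata (including the torsion-acquiring $k=1$ strata) are genuine technical burdens that the paper does not discharge and that you would need to make precise.
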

 
\begin{proof}
We first note that the space $\widetilde \M$ has already been constructed above in terms of representations into
${\rm SU}(n)$; see 
(\ref{MM-sympl}). Also, isomonodromic deformation gives a Cartan connection on the fibration of $\widetilde \M$ over $U$, giving 
us a good way of obtaining diffeomorphisms from fiber to fiber away from $t\,=\,0$, as well as a way to map fibers at 
$t\,\neq\, 0$ to the central fiber. 

Furthermore, the Narasimhan Seshadri theorem, at $t\,\neq\, 0$, and Proposition 3.2 of 
\cite{HuJS2} at $t\,=\,0$ give us analytic structures on the fibers of $\widetilde\M\,\longrightarrow\, U$, as spaces
of vector bundles on the curves $X_t$, and of 
spaces of symmetric framed parabolic bundles on $\widetilde X_0$. We thus have a smooth (i.e., smooth away from the 
singularities of the fibers) family of algebraic varieties above the disk $U$. The question is what is the holomorphic 
structure transverse to these fibers, or what are the holomorphic sections of $\widetilde\M\,\longrightarrow\, U$.
 
Isomonodromic deformation already gives us the holomorphic structure in the transverse direction; more generally, 
(noting that one way of seeing an isomonodromic deformation is to have it induced by an ambient connection on the 
surface $\widetilde \X$) we will have that any holomorphic family of bundles on the surface $\widetilde \X$ with a 
framed doubly parabolic structure will give us a holomorphic section of $\widetilde \M$.
\end{proof}

All that remains now is to quotient out the natural action of the anti-diagonal torus $T\,=\,
(\C^*)^{n-1}$ of ${\rm SL}(n,\C)^2$ to get a moduli space whose fiber at $t\,=\,0$ will be the 
{\it reduced} symmetric framed doubly parabolic bundles on $X_0$:
\begin{equation} \M \,=\, \widetilde \M /\!\!/ T\, . \end{equation}
This is the degeneration that we want. We note that the fiber over $0$ is a quotient of a 
moduli space associated to $\widetilde X_0$.

\section{Multiple nodes}

The above construction obviously iterates; one can take a family $X_{t_1, t_2,\cdots, t_s}$ of curves over 
$U_1\times U_2\times\cdots \times U_s$ degenerating to a curve $X_{0,\cdots,0}$ with $s$ nodes, with the 
generic $X_{t_1, t_2,\cdots ,t_s}$ (i.e., for $({t_1,\, t_2,\,\cdots ,\,t_s})$ away from the coordinate axes) being 
smooth, and build above $U_1\times U_2\times\cdots \times U_s$ a family $\M$ of moduli spaces:

\begin{theorem}
There is a family $ \M$ of moduli spaces over $U_1\times U_2\times\cdots \times U_s$ of reduced symmetric framed doubly parabolic bundles, with fiber at generic $({t_1,\,t_2,\,\cdots ,\,t_s})$ the 
moduli space $M_{t_1, t_2,\cdots,t_s}$ of ${\rm SL}(n,\C)$-bundles over $X_{t_1,t_2,\cdots ,t_s}$ and, at
$(0,\,0,\,\cdots ,\,0)$, the moduli space $M_{(0,0,\cdots ,0)}$ of {\it reduced} symmetric framed doubly parabolic bundles on $X_{0,\cdots ,0}$. This space is a 
$(\C^*)^{s(n-1)}$ quotient of a space of $\widetilde M_{(0,0,\cdots ,0)}$ of symmetric framed doubly parabolic bundles, a moduli space over the desingularised curve $\widetilde X_{0,\cdots ,0}$.

Symplectically, $ \M$ is isomorphic to a family of symplectic varieties over $U_1\times U_2\times\cdots \times U_s$, whose fiber at $({t_1,\, t_2,\,\cdots ,\,t_s})$ generic is the moduli space $M_{t_1, t_2,\cdots ,t_s}$ of
${\rm SU}(n)$ representations of the fundamental group of $X_{t_1, t_2,\cdots ,t_s}$, and whose fiber at $ (0,\,0,\,\cdots,\,0)$ is a symplectic quotient of the family of imploded representations of the fundamental group of the punctured desingularised curve $\widetilde X_{0,\cdots,0}^*$ (punctured at the preimages of the nodes), where the quotient is by an action of $(S^1)^{s(n-1)}$ on the framings at the punctures.
\end{theorem}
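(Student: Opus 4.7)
The plan is to iterate the one-node construction of Section~\ref{se3} simultaneously at all $s$ nodes, exploiting the fact that the degeneration is modelled locally at each node by a quadric $Q_{t_i}=\{xy=t_i\}$ inside a polydisk $B_i\subset\C^2$, and that these polydisks can be chosen pairwise disjoint inside $\X$. The ambient blow-up $\widetilde \X \to \X$ is performed at all $s$ nodes at once, producing disjoint exceptional divisors $D_1,\dots,D_s$ (write $D=D_1+\cdots+D_s$) and a common desingularisation $\widetilde X_{0,\dots,0}$ of the central fiber carrying $2s$ punctures $x_1^{(i)},x_2^{(i)}$, $i=1,\dots,s$. Over each $N_i$ one puts the isomonodromic model of Section~\ref{se2.2} with its own diagonal matrix $\vec\alpha^{(i)}\in\Delta$; because the polar loci are confined to $\widetilde X_{0,\dots,0}\cup 2D_i$, the $s$ local models do not interfere and glue to a global partial connection on $\widetilde\X$.

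Next I would define a symmetric framed doubly parabolic structure at each blown-up node exactly as in Section~\ref{se3}, imposing $\vec\alpha^{2,(i)}=-R(\vec\alpha^{1,(i)})$ together with framings $\widetilde\beta^{1,(i)}_k,\widetilde\beta^{2,(i)}_k$ of the successive top exterior powers at each pair $x_1^{(i)},x_2^{(i)}$. Stability is declared fibrewise over $U_1\times\cdots\times U_s$: on each $X_{t_1,\dots,t_s}$ with all $t_j\neq 0$ it is ordinary ${\rm SL}(n,\C)$-stability, and on the partially or totally degenerate fibers it is framed doubly parabolic stability with respect to $\vec\alpha^{(i)}$ at every node $i$ for which $t_i=0$. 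Since a subsheaf of $\widetilde\E$ restricts to a subsheaf node by node, the stability inequalities separate, and Lemma~\ref{lod}, Lemma~\ref{lod2} carry over additively: the restriction of $\widetilde\E$ to each $D_i$ is $\SO_{D_i}(-1)^{k_i}\oplus \SO_{D_i}^{n-k_i}$ independently, and the Euler characteristic computation of Lemma~\ref{lod2} generalises because the per-node contributions (one less unit of genus, one more exceptional divisor, one more set of gluing constraints) cancel separately at each node.

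The fibered GIT construction then runs as before. Choose a relative divisor $\mathbf{C}$ with $H^1$ vanishing uniformly on every fiber; take relative direct images $V_0\subset V$ to $U_1\times\cdots\times U_s$, where now $V_0$ is the direct image of $\widetilde\E(\mathbf{C}-D)$ tensored by the ideal sheaf of the \emph{entire} singular locus; form the determinant map $\beta_0$ and the per-node framings $\beta^{1,(i)}_k,\beta^{2,(i)}_k$ for $i=1,\dots,s$; and apply the standard linearization. This produces a family $\widetilde\M\to U_1\times\cdots\times U_s$ whose fiber at $(0,\dots,0)$ is the moduli of symmetric framed doubly parabolic bundles on $\widetilde X_{0,\dots,0}$. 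Each node carries a natural antidiagonal torus $T^{(i)}\simeq(\C^*)^{n-1}$ acting on the framings at $x_1^{(i)},x_2^{(i)}$; the $s$ actions commute because they act on disjoint data, so the product quotient $\M:=\widetilde\M/\!\!/(T^{(1)}\times\cdots\times T^{(s)})$ delivers the desired $(\C^*)^{s(n-1)}$ quotient yielding \emph{reduced} symmetric framed doubly parabolic bundles over the origin. Holomorphic structure transverse to the fibers is provided by isomonodromic deformation on $\widetilde\X$, just as in the one-node case.

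For the symplectic statement one mirrors the construction of Section~\ref{se2} at all nodes simultaneously: unglueing each vanishing cycle $\gamma_{t_i}$ introduces a pair of punctures on $\widetilde X_{0,\dots,0}^*$ with opposite holonomy $A^{(i)},(A^{(i)})^{-1}$; the representation variety is imploded at each pair as in \eqref{EC}, and the Hamiltonian $T$-actions on the framings at the $2s$ punctures combine to an $(S^1)^{s(n-1)}$-action whose antidiagonal symplectic reduction corresponds to the holomorphic $(\C^*)^{s(n-1)}$ quotient via the Narasimhan--Seshadri correspondence of Theorem~\ref{thm3.1} applied at each node. The main obstacle I anticipate is not the single-node step, which genuinely localises, but the \emph{uniformity} of the boundedness lemma and the $H^1$-vanishing across the full polydisk, including the mixed strata where some $t_i=0$ and others are not. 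These mixed strata interleave ordinary semistability at some nodes with doubly parabolic semistability at others, and one must verify that a single twist $\mathbf{C}$ works globally so that the GIT quotient can be carried out in one step rather than by successive partial degenerations; once this is in hand, everything else separates over the nodes and the theorem follows from the single-node result.
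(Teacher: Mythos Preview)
Your proposal is correct and takes the same approach as the paper: the paper does not give a detailed proof of this theorem at all, but simply prefaces it with ``The above construction obviously iterates'' and states the result. You have supplied exactly the iteration the paper has in mind --- disjoint local quadric models, simultaneous blow-up, per-node framed doubly parabolic structures, fibrewise stability, and a product antidiagonal torus quotient --- and your symplectic paragraph matches the paper's implosion-and-reduce picture node by node. The uniform boundedness/$H^1$-vanishing issue you flag over the mixed strata is a genuine technical point, but the paper does not address it either; it is tacitly absorbed into ``obviously iterates''.
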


Now let us take the nodal degeneration of a curve $X$ into a union $X_0$ of $2g-2$ trinions, whose $6g-6$ punctures are identified pairwise at $3g-3$ nodes. The above theorem will give a degeneration of the moduli space over $X$ into a glueing of moduli spaces $M^T_i$ associated to trinions, 
so that
\begin{align}
\widetilde M_{(0,\,0,\,\cdots,\,0)} &\,=\,( \prod_i M^T_i)/\!\!/_{GIT} (\C^*)^{(3g-3)(n-1)}
\nonumber\\
&=\,( \prod_i M^T_i)/\!\!/_{sympl} (S^1)^{(3g-3)(n-1)}.\nonumber
\end{align} 
Here the action of an individual $(\C^*)^{n-1}$ or $(S^1)^{n-1}$ is the antidiagonal action on the framings 
associated to a pair of punctures at a given node. We note that though we had imposed the condition of symmetry, 
this is automatic in our symplectic picture, under the symplectic quotient, being forced by the vanishing of the 
moment map. It is then automatic in the complex picture too, by the standard theorem relating symplectic and 
algebraic quotients; the non-symmetric strata are unstable.

For $n\,=\,2$, by the results of \cite{HuJ}, we have obtained a limit which is a toric variety, 
as the space associated to the trinion is just $\mathbb P^3(\mathbb C)$. On the other hand, 
for higher ranks, further degeneration is necessary, an issue we hope to address in an 
upcoming paper.

\section*{Acknowledgements}

We thank Peter Newstead for pointing out some references.

\end{document}